\renewcommand{\op}{\operatorname}
\numberwithin{equation}{section}
\begin{document}

\title{The local cohomology of vector fields}
\author{Brian R. Williams}
\thanks{Boston University, Department of Mathematics and Statistics}
\email{bwill22@bu.edu}

\maketitle

Let $\op{Vect}(M)$ be the Lie algebra of smooth vector fields on a manifold $M$.
The Lie algebra cohomology of vector fields is the cohomology of the Chevalley--Eilenberg cochain complex 
\[
C^\bu\Vect(M) = \oplus_{k \geq 0} C^k \Vect(M)
\]
where in degree $k$ is the space of continuous $k$-linear totally alternating functional on $\Vect(M)$ and the differential is the Chevalley--Eilenberg differential encoding the Jacobi bracket of vector fields.
The Lie algebra cohomology of vector fields has been studied extensively in the context of characteristic classes of foliations \cite{FuksChar, BottFoliation,  BottSegal, Guillemin, LosikTrivial1, GK, BR}.

An important step in the computation of this cohomology is a computation of the cohomology of the diagonal subcomplex 
\[
C^\bu_{\triangle} \Vect(M) \subset C^\bu\Vect(M)   
\]
which consists of cochains \(\varphi \in C^k\Vect(M)\) satisfying $\varphi(X_1,\ldots,X_k) = 0$ if and only if
\[
\bigcap_{i=1}^k {\rm Supp}(X_i) = \emptyset .
\]
This paper concerns the cohomology of a smaller subcomplex: the complex of \textit{local cochains} of $\op{Vect}(M)$, and the resulting local cohomology.
A cochain \(\varphi \in C^k\Vect(M)\) is said to be local if it can be written as
\[
\varphi (X_1 , \ldots, X_n) = \int_M L(X_1, \ldots, X_n) .
\]
where \(L\) is a density-valued (Lagrangian) function of the vector fields $X_1,\ldots,X_n$ that only depends on the jets of these vector fields.
The local cochain complex of \(\Vect(M)\) will be denoted \(\cloc^\bu\Vect(M)\).
Every local cochain is diagonal, which leads to a sequence of inclusions of cochain complexes
\[
\cloc^\bu \Vect(M) \hookrightarrow \op{C}^\bu_{\triangle} \Vect(M) \hookrightarrow C^\bu \Vect(M) .
\]
A result of Gelfand--Fuks \cite{GFmanifold} implies that the first arrow induces an isomorphism in cohomology, see also \cite{Guillemin,LosikTrivial1}.

Our main result is a characterization of the local cohomology of smooth vector fields on an arbitrary smooth manifold.
Our approach utilizes a relationship between smooth and formal geometry that can be attributed to Gelfand, Kazhdan, Fuks, and many others, for example see 
\cite{GKF,BKformal}.
By similar methods, we compute the local cohomology of the cohomology of holomorphic vector fields on an arbitrary complex manifold.
A note is in order in the complex setting; the Lie algebra of holomorphic vector fields.
In a naive sense, the Lie algebra of holomorphic vector fields is \textit{not} a local Lie algebra (the underlying sheaf is not the $C^\infty$-sections of a vector bundle).
Thus, in order to define the notion of locality (and also the notion of diagonal cohomology) we use a resolution of holomorphic vector fields by vector bundles.

The result about the local cohomology of smooth vector fields follows from the known characterization of the diagonal cohomology of vector fields on a smooth manifold \cite{Guillemin,LosikTrivial1,LosikTrivial2}, see also \cite[\S 2.4]{Fuks}.
Indeed, a result of \cite{GFmanifold} implies that for smooth vector fields the inclusion $C^\bu_{loc} \Vect (M) \hookrightarrow C^\bu_{\triangle} \Vect(M)$ is a quasi-isomorphism. 
The formulation of the local cohomology of holomorphic vector fields is presented in this paper.
For an algebro-geometric definition of the diagonal cohomology of vector fields we refer to \cite{HKgf} where it is used to characterize the entire cohomology of vector fields on a smooth algebraic variety; we expect that their definition to agree with our notion of local cohomology.

We remark on the relationship between local Lie algebra cohomology and Lie algebra cohomology.
For any sufficiently nice sheaf (say, one presented as the sections of a smooth vector bundle) of Lie algebra $\cL$ on a manifold $M$ there is a notion of local cohomology.
The cochain complex $C^\bu_{loc}(\cL)$, introduced in section \ref{sec:dfn}, that computes this is naturally a sheaf on $M$.
At the level of global sections, if $M$ is compact, this complex admits a map $C^\bu_{loc}(\cL(M)) \to C^\bu(\cL(M))$ where $C^\bu(\cL(M))$ is the Lie algebra cohomology of the Lie algebra of global sections.
For $\cL = \Vect(M)$, the sheaf of smooth vector fields, the local cohomology is a shift of the de Rham cohomology of the total space $Y(M)$ of a fibration over $M$ whose fiber over $x \in M$ is isomorphic to the restriction of the universal $U(n)$-bundle over $BU(n)$ to its $2n$-skeleton.
See theorem \ref{thm:smooth} for a precise statement.
The cohomology of $\Vect(M)$, on the other hand, is the cohomology of the space of sections of $Y(M)$, see \cite{Haefliger1, Haefliger2, BottSegal}.
The induced map
\beqn
H^\bu_{loc}(\Vect(M)) \to H^\bu_{Lie}(\Vect(M))
\eeqn
is the composition
\beqn
H^{\bu}_{dR}(Y(M)) \xto{ev^*} H^\bu_{dR}(\op{Sect}(Y(M)) \times M) \xto{\int_M} H^\bu_{dR}(\op{Sect}(Y(M)) [-n]
\eeqn
where the first arrow is pullback along the natural evaluation map 
and the second map is integration along $M$.

Let $X$ be a complex manifold.
In this paper we give a characterization of the local Lie algebra of holomorphic vector fields $\cT(X) \simeq \op{Vect}^{hol}(X)$.
There is again a natural map 
\beqn\label{eqn:hollocalinclusion}
H^\bu_{loc}(\cT(X)) \to H_{Lie}^\bu(\cT(X)) .
\eeqn
As mentioned earlier, we expect that $H^\bu_{loc}(\cT(X))$ agrees with the cohomology of the diagonal complex of vector fields considered in \cite{HKgf}.
One defines the space $Y(X)$ just as in the smooth case; it is a holomorphic fiber bundle over $X$ whose fiber over $x \in X$ is as in the smooth case.
In \cite{HKgf}, it is shown that the cohomology of the space of holomorphic sections $\op{Sect}(Y(X))$ is only isomorphic to $H^\bu(\cT(X))$ in the case that $X$ is affine; in general they produce a map 
\beqn
H^\bu_{dR} (\op{Sect}(Y(X))) \to H^\bu_{Lie} (\cT(X)) 
\eeqn
which we expect our map \eqref{eqn:hollocalinclusion} to factor through.

Interest in the local cohomology of vector fields is motivated, in part, by classical and quantum field theory.
A classical field theory is prescribed by a Lagrangian density depending on the fields, whose associated Euler--Lagrange equations dictate the dynamics of the classical system.
Additionally, local cohomology of local Lie algebras is a home for (local) \textit{anomalies}, which are an important feature of a quantum field theory.
In short, an anomaly describes the failure of a symmetry in a classical field theory to persist to a symmetry at the quantum level.
Similarly to action functionals, certain anomalies can be realized as local cohomology classes.
A diffeomorphism invariant field theory on a manifold \(M\) receives an infinitesimal action by the Lie algebra of smooth vector fields \(\Vect(M)\).
Similarly, a holomorphic field theory on a complex manifold $X$ \cite{BWhol} receives an infinitesimal action by the Lie algebra of holomorphic vector fields.
Anomalies for this infinitesimal action to exist at the quantum level are given by local cohomology classes in \(\cloc^\bu\Vect(M)\), or $\cloc^\bu\Vect^{hol}(X)$; thus motivating the need to understand the resulting cohomology.
The corresponding problem for Riemannian (or Lorentzian) quantum field theories is a classic topic, see \cite{Duff,Deser:1993yx}, for example.
For applications of results of this paper to anomalies in supersymmetric quantum field theory we refer to \cite{BWac}.

\subsection*{Acknowledgements}

I thank Boston University for their support during the preparation of this paper.

\tableofcontents

\newpage

\section{Definitions and main results}
\label{sec:dfn}

In this paper, $M$ denotes a smooth manifold and $X$ denotes a complex manifold.
We work in the $C^\infty$-category, so unless otherwise specified a ``section`` means a smooth section,
a ``differential form'' means a smooth differential form, and so on.
All functions and cochains are complex-valued.

\subsection{Local functionals}

Let \(E \to M\) denote a $\Z$-graded vector bundle on \(M\) and denote by \(\cE\) its sheaf of sections.
We consider the pro vector bundle of $\infty$-jets which we will denote by $\op{jet}(E)$, see \cite{Anderson} or \cite[\S 5.6]{CostelloBook} for instance.
The sheaf of smooth sections of this pro vector bundle carries the natural structure of a $D_M$-module.

\begin{dfn}
Let $E$ be a graded vector bundle on $M$.
The sheaf of \defterm{Lagrangians} on $E$ is the $C^\infty_M$-module
\beqn
\op{Lag} (E) \define \prod_{n > 0} {\rm Hom}_{C^\infty_M} \left(\op{jet} (E) , C^\infty_M\right) .
\eeqn
\end{dfn}

\begin{rmk}
The notation ${\rm Hom}_{C^\infty_M} \left(\op{jet} (E) , C^\infty_M\right)$ refers to the sheaf of continuous linear maps of $C^\infty_M$-modules.
This can be viewed as an ind vector bundle formally dual to the pro vector bundle $\op{jet} (E)$.
The flat connection defining the $D_M$-module structure on $\op{jet} (E)$ endows this sheaf with the structure of a $D_M$-module.
Notice that the constant functionals on $\op{jet} (E)$ do not appear in the definition of $\op{Lag}(E)$, this is mostly for conventional reasons and will not play a huge role in what follows.
\end{rmk}

For any graded vector bundle \(E\), the $C^\infty_M$-module \(\op{Lag}(E)\) has the natural structure of a $D_M$-algebra, induced from the $D_M$-module structure on \(\op{jet}(E)\).

Let \({\rm Dens}_X\) be the right $D_{X}$-module of densities on \(X\).
Given any left $D_M$-module \(V\) one can consider the following $C^\infty_M$-module
\beqn
\op{Dens}_{X} \otimes_{D_{X}} V .
\eeqn
If \(X\) is an oriented smooth manifold and \(V\) is flat, then this agrees with (a shift of) the de Rham complex of \(V\), see below.
For the case at hand, \(V\) is the left $D_M$-module of Lagrangians \({\rm Lag} (E)\) and we have the following definition.

\begin{dfn}
Let $E$ be a vector bundle on $X$.
The $C^\infty_M$-module of \defterm{local functionals} on $X$ is
\beqn
\oloc(E) \define {\rm Dens}_X \otimes_{D_M} {\rm Lag} (E) .
\eeqn
\end{dfn}

Concretely, a section of \({\rm Lag}(E)\) is a sum of functionals of the form
\beqn
\phi \in \cE \mapsto D_{1} \phi_{1} \cdots  D_{n} \phi_{n}
\eeqn
where \(D_i\) are differential operators acting on the bundle \(E\).
Likewise, a section of \(\oloc(E)\) is given as a sum of functionals which send a section \(\phi\) to a class
\beqn
\bigg[D_1 \phi_{1} \cdots  D_{n} \phi_{n} \omega \bigg]
\eeqn
where \(\omega\) is a density on \(X\).
The brackets denotes an equivalence class where two sections are equivalent if they differ up to a total derivative.
For this reason, we will often write such an element using the integration symbol
\beqn
\int D_{1} \phi_{1} \cdots  D_{n} \phi_{n} \omega
\eeqn
where we provide the warning that no actual integration is occurring. \footnote{Of course, unless the section $\phi$ is compactly supported integration over an open subset is ill-defined.}

If $X$ is an oriented smooth manifold, the sheaf of local functionals of $E$ can be expressed using the de Rham complex of the $D_M$-module of Lagrangians.
In this case, ${\rm Dens}_X$ can be replaced by the bundle of top forms $\Omega^{d}_X$ where $d = \dim_{\R}(M)$.
This right $D_M$-module $\Omega^d_M$ has a free resolution of the form
\beqn
\Omega^0 \otimes_{C^\infty_M} D_M [d] \to \cdots \to \Omega^{d-1}_X \otimes_{C^\infty_M} D_M [1] \to \Omega^d_M \otimes_{C^\infty_M} D_M .
\eeqn
Since ${\rm Lag}(E)$ is flat as a $D_M$-module one can use this resolution to obtain a quasi-isomorphism
\beqn\label{derham1}
\oloc(E) \; \simeq \; \Omega^\bu \bigg( X \; , \; {\rm Lag}(E) \bigg)[d] .
\eeqn
We will use this description extensively throughout this paper.
For more details see \cite[lemma 3.5.4.1]{CG2}.
In the unoriented case one would need to use a twisted version of the de Rham complex.

\subsection{Local Lie algebras and cohomology}

The next definition we will need is that of a local dg Lie algebra and its local cohomology.
We refer to \cite{CG2} for more details on the definitions below.

\begin{dfn}
A \defterm{local dg Lie algebra} on a smooth manifold $X$ is a triple $(L, \d, [\cdot , \cdot])$ where:
\begin{itemize}
\item[(i)] $L$ is a $\Z$-graded vector bundle on $X$ of finite total rank;
\item[(ii)] $\d$ is a degree $+1$ differential operator $\d \colon \cL \to \cL$ on the sheaf $\cL=\Gamma(L)$ of smooth sections of~$L$, and
\item[(iii)] $[\cdot, \cdot]$ is a bilinear polydifferential operator
\beqn
[\cdot , \cdot] \colon \cL \times \cL \to \cL
\eeqn
\end{itemize}
such that the triple $(\cL, \d, [\cdot,\cdot])$ carries the structure of a sheaf of dg Lie algebras.
\end{dfn}

Just as in the case of an ordinary graded vector bundle, we can discuss the Lagrangians on a local Lie algebra \(L\).
In this case, \({\rm Lag}(L[1])\) is equipped with the Chevalley--Eilenberg differential \(\d_{\rm CE}\) induced from the Lie algebra structure on \(L\).
In fact, the $\infty$-jet bundle \(\op{jet}(L)\) is a dg Lie algebra object in $D_M$-modules and we have the dg $D_M$-module of reduced Chevalley-Eilenberg cochains
\beqn
\op{C}_{red}^\bu (\op{jet}(L)) = (\op{Lag}(L[1]), \d_{CE}) .
\eeqn
(Notice we look at reduced cochains since we have thrown out the constant functions in the definition of \({\rm Lag}(L[1])\).)
Since \(\d_{CE}\) is compatible with the $D_M$-module structure, this induces a differential on the space of local functionals \(\oloc(L[1])\).

We arrive at the central object of study of this paper.

\begin{dfn}
The \defterm{local Chevalley--Eilenberg cochain complex} of a local Lie algebra~$\cL$ is the sheaf of cochain complexes
\begin{align}
\cloc^\bu(\cL) & \define \left(\oloc(L[1]) , \d_{CE} \right) \\ & = {\rm Dens}_X \otimes_{D_M} \op{C}_{red}^\bu(\op{jet}(L)).
\end{align}
We denote the cohomology of the global sections of this complex of sheaves by $H_{loc}^\bu(\cL (X))$ and refer to it as the local cohomology of $\cL$ over $X$.
\end{dfn}

\subsection{Results on the local cohomology of vector fields}
We now turn to the local Lie algebras of vector fields and the first main results of the paper.
A basic example of a local Lie algebra is the sheaf of smooth vector fields $\op{Vect}_M = \Gamma(M,\T_M)$ on a smooth manifold~$M$.
Our first result is a characterization of the local cohomology of $\Vect_M$ over an $d$-manifold $M$.
To state the result we introduce a formal object.

Denote the Lie algebra of vector fields on the formal $d$-disk, 
as studied by Gelfand and Fuks \cite{GF1,Fuks}, by $\lie{vect}(d)$.
The notation $H_{(red)}^{\bu} (\lie{vect}(n))$ refers to the (reduced) continuous Lie algebra cohomology of the Lie algebra of formal vector fields.
Let $X(d)$ denote the restriction of the universal $U(d)$ bundle to the $2d$-skeleton of $BU(d) = Gr(d,\infty)$.
The following result was first proved for the diagonal cohomology.

\begin{thm}[see also \cite{Guillemin,LosikTrivial1,LosikTrivial2}] \label{thm:smooth}
Let $M$ be a smooth oriented manifold of dimension $d$.
Then
\beqn
H^k_{loc}(\Vect(M)) \; \cong \; H_{red}^{d+k}(Fr^\C_M \times^{U(d)} X_d)
\eeqn
for all $k$.
In particular, if $M$ is parallelizable then
\beqn
H^k_{loc}(\Vect(M)) \; \cong \; \bigoplus_{i=0}^{d} H^i_{dR}(M) \otimes H^{d+ k-i}_{red}(\lie{vect}(d)) .
\eeqn
\end{thm}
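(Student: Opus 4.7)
The plan is to unwind the local Chevalley--Eilenberg complex using the de Rham presentation \eqref{derham1}, then use formal geometry (Gelfand--Kazhdan style) to descend the problem to the (unitary) frame bundle, where the fiberwise computation is the classical Gelfand--Fuks calculation of $H^\bu(\lie{vect}(d))$.

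First, specializing \eqref{derham1} to $L = T_M$ gives a quasi-isomorphism
\beqn
\cloc^\bu(\Vect_M) \; \simeq \; \Omega^\bu\bigl(M,\,C^\bu_{red}(\op{jet}(T_M))\bigr)[d],
\eeqn
where the total differential is the sum of the Chevalley--Eilenberg differential on the jet coefficients and the de Rham differential coming from the flat $D_M$-structure. The fiber of $\op{jet}(T_M)$ at $x \in M$ is non-canonically isomorphic to $\lie{vect}(d)$, so fiberwise $C^\bu_{red}(\op{jet}(T_M))$ is the continuous reduced CE complex of formal vector fields. The ambiguity in the identification is exactly a torsor over $\op{Aut}(\widehat{\cO}_d)$, i.e.\ a reduction of structure to the formal frame bundle, which retracts onto $Fr_M$ and further onto the unitary frame bundle $Fr^\C_M$.

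Second, I would use the Gelfand--Kazhdan (``formal geometry'') machinery \cite{GKF,BKformal} to identify the complex above with a basic subcomplex on the frame bundle. Concretely, the flat $(W_d \rtimes GL_d)$-structure on $Fr_M$ induces a quasi-isomorphism
\beqn
\Omega^\bu\bigl(M,\,C^\bu_{red}(\op{jet}(T_M))\bigr) \;\simeq\; \Bigl(\Omega^\bu(Fr^\C_M) \otimes C^\bu_{red}(\lie{vect}(d))\Bigr)^{U(d)\text{-basic}},
\eeqn
after replacing $GL_d$ by the homotopy equivalent maximal compact $U(d)$. Now invoke the Gelfand--Fuks theorem: $H^\bu_{red}(\lie{vect}(d),U(d))$ is the cohomology of $X_d$, the truncation of the universal $U(d)$-bundle to the $2d$-skeleton of $BU(d)$. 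Combining this with the Borel/Cartan model for the equivariant cohomology of the frame bundle gives the isomorphism
\beqn
H^k_{loc}(\Vect(M)) \;\cong\; H^{k+d}_{red}\bigl(Fr^\C_M \times^{U(d)} X_d\bigr).
\eeqn

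Finally, if $M$ is parallelizable then $Fr^\C_M \cong M \times U(d)$, so $Fr^\C_M \times^{U(d)} X_d \cong M \times X_d$, and the Künneth formula (together with the identification of $H^\bu(X_d)$ with $H^\bu_{red}(\lie{vect}(d))$ above the bottom degree) gives the stated decomposition. The main obstacle is the precise formulation of the Gelfand--Kazhdan descent in the second step: one needs to verify that the $D_M$-module structure on $\op{jet}(T_M)$ really does come from a flat Cartan connection on $Fr^\C_M$ whose curvature-free form identifies the de Rham complex of jet cochains with the basic complex on the frame bundle, and that this identification is compatible with passing to reduced CE cochains. Once this compatibility is established, the final answer is a routine combination of Gelfand--Fuks's formal computation with the Chern--Weil-style basic cohomology of $Fr^\C_M$.
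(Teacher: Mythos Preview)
Your overall strategy---de Rham model \eqref{derham1}, Gelfand--Kazhdan descent to the frame bundle, then the Gelfand--Fuks identification $C^\bu(\lie{vect}(d))\simeq\Omega^\bu(X_d)$---is exactly the paper's route. But you have misidentified where the content lies. The ``obstacle'' you flag (that the $D_M$-module structure on $\op{jet}(\T_M)$ comes from a flat Cartan connection on $Fr^\C_M$) is standard Gelfand--Kazhdan machinery and is not the issue. What GK descent actually gives you is an \emph{isomorphism} of graded vector spaces
\[
\Omega^\bu\bigl(M,\,C^\bu_{red}(\op{jet}(\T_M))\bigr)\;\cong\;\bigl(\Omega^\bu(Fr^\C_M)\otimes C^\bu_{red}(\lie{vect}(d))\bigr)_{basic},
\]
but with differential $\d_{dR}+\d_{CE}+\rho_{\lie w}(\til\omega^{coor})$: the Grothendieck connection couples the two factors through the $\lie{vect}(d)$-action on $C^\bu_{red}(\lie{vect}(d))$ by Lie derivatives $L_x$. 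Your displayed quasi-isomorphism, read as having the uncoupled differential (which is what you need before invoking Gelfand--Fuks and the Borel model), is therefore not what descent hands you.

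The step you are missing---and the genuine content of the argument---is that this $\lie{vect}(d)$-action on its own reduced Chevalley--Eilenberg complex is \emph{homotopically trivial}: Cartan's formula $[\d_{CE},\iota_x]=L_x$ exhibits the nullhomotopy explicitly. This is what the paper isolates as Lemma~\ref{lem:silly}, and it is precisely what lets one gauge away the $\rho_{\lie w}(\til\omega^{coor})$ term and reduce to the uncoupled basic complex $(\Omega^\bu(Fr^\C_M)\otimes C^\bu_{red}(\lie{vect}(d)))_{basic}$ with differential $\d_{dR}+\d_{CE}$. Only after this step does the identification with $\Omega^\bu(Fr^\C_M\times^{U(d)}X_d)$ go through. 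Once you insert this observation, your argument and the paper's coincide; the parallelizable case then follows exactly as you say.
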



Next we turn to the case of complex manifolds and the Lie algebra of holomorphic vector fields.


Let $X$ be a complex manifold and denote by $\T^{1,0}_X$ the holomorphic tangent bundle.
Consider its Dolbeault complex
\beqn
\cT_X \define \Omega^{0,\bu}(X , \T^{1,0}_X)
\eeqn
This is a sheaf of cochain complexes (in fact, it is an elliptic complex) where the differential is the $\dbar$-operator.
Moreover, this sheaf of cochain complexes is equipped with a bracket $[\cdot, \cdot]$ which extends the Lie bracket of vector fields.
This endows $\cT_X$ with the structure of a local Lie algebra.
The assignment $X \mapsto \cT_X$ defines a sheaf on the site of complex manifolds which we will denote simply by $\cT$.

The Dolbeault complex of a holomorphic vector bundle is a resolution for its sheaf of holomorphic sections.
Note that the sheaf of holomorphic vector fields is {\em not} a local Lie algebra since it is \textit{not} the space of $C^\infty$-sections of a vector bundle.
Therefore, to capture the notion of holomorphic vector fields using local Lie algebras it is necessary to consider this resolution $\cT$.
Indeed, if $\cT^{hol} = \Gamma^{hol}(\T^{1,0})$ denotes the sheaf of holomorphic vector fields, the embedding $\cT^{hol} \hookrightarrow \cT$ is a quasi-isomorphism.

Our next result pertains to the local cohomology of holomorphic vector fields and is the content of section \ref{sec:holomorphic}.
It closely parallels the result in the smooth case above.

\begin{thm} \label{thm:hol}
Let $X$ be a complex manifold of complex dimension $n$ and let $Fr_X$ be the principal $U(n)$-bundle of frames of the holomorphic tangent bundle.
Then
\beqn
H^k_{loc}(\cT(X)) \; \cong \; H_{dR}^{k+n}(Fr_X \times^{U(n)} X(n)). 
\eeqn
In particular, if $X$ has trivializable tangent bundle then
\beqn
H^k_{loc}(\cT(X)) \; \cong \; \bigoplus_{i=0}^{2n} H^i_{dR}(X) \otimes H^{2n + k-i}_{red}(\lie{vect}(n)) .
\eeqn
\end{thm}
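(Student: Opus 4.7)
The approach closely parallels the proof of Theorem~\ref{thm:smooth}, with the Dolbeault resolution $\cT_X = \Omega^{0,\bu}(X, T^{1,0}_X)$ playing the central role in reducing the $2n$-dimensional real geometry to $n$-dimensional holomorphic data. The overall plan is to express the local cochains as a de Rham complex of a Chevalley--Eilenberg $D_X$-module via~\eqref{derham1}, apply Gelfand--Kazhdan formal geometry in its holomorphic form to trivialize the jet bundle against the frame bundle, and finally invoke the classical Gelfand--Fuks identification of $H^\bu_{red}(\lie{vect}(n))$ with $H^\bu(X(n))$.

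Concretely, I would proceed in four steps. First, using~\eqref{derham1} with real dimension $2n$, identify $\cloc^\bu(\cT(X)) \simeq \Omega^\bu(X, C^\bu_{red}(\op{jet}(\cT)))[2n]$. Second, show that the smooth $\infty$-jet bundle of the Dolbeault resolution is quasi-isomorphic, as a $D_X$-module, to the holomorphic $\infty$-jet bundle of $T^{1,0}_X$; this follows from a formal $\dbar$-Poincar\'e lemma applied at the level of jets. Under this identification, the anti-holomorphic factor of the de Rham complex contributes cohomology only in top Dolbeault degree, collapsing the effective shift from $2n$ to $n$. Third, invoke holomorphic Gelfand--Kazhdan formal geometry to identify $\op{jet}^{hol}(T^{1,0}_X)$ with the bundle associated to the holomorphic frame bundle with fiber $\lie{vect}(n)$; then Chevalley--Eilenberg functoriality and reduction to the unitary frame bundle $Fr_X$ via a Hermitian metric give
\[
\cloc^\bu(\cT(X)) \;\simeq\; \Omega^\bu\bigl(Fr_X \times^{U(n)} C^\bu_{red}(\lie{vect}(n))\bigr)[n].
\]
Fourth, apply the classical Gelfand--Fuks theorem, which provides a de Rham model for $X(n)$ via $C^\bu_{red}(\lie{vect}(n))$ equipped with its natural $U(n)$-action, to conclude that $H^k_{loc}(\cT(X)) \cong H^{k+n}_{dR}(Fr_X \times^{U(n)} X(n))$. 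The parallelizable statement follows immediately from the K\"unneth formula, since in that case $Fr_X \times^{U(n)} X(n) \cong X \times X(n)$.

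The main technical obstacle is the second step, namely making precise the $D_X$-module quasi-isomorphism $\op{jet}(\cT) \simeq \op{jet}^{hol}(T^{1,0}_X)$ and, crucially, tracking the reduction of the degree shift from the real dimension $2n$ to the complex dimension $n$. The mechanism should be that for holomorphic coefficients the smooth density $\op{Dens}_X = \Omega^{n,n}_X$ is replaced, up to quasi-isomorphism, by the holomorphic canonical bundle $K_X = \Omega^{n,0}_X$; the $(0,n)$-factor is contracted by the formal $\dbar$-Poincar\'e lemma against the holomorphic structure on the jets. Verifying this compatibly with the local dg Lie algebra structure on $\cT$ and the associated Chevalley--Eilenberg differential is the principal new ingredient compared to the smooth case of Theorem~\ref{thm:smooth}.
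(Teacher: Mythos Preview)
Your overall strategy matches the paper's proof closely: apply \eqref{derham1} to get the de Rham complex of $C^\bu_{red}(\op{jet}(\cT))$ shifted by $2n$, replace the smooth jets of $\cT$ by holomorphic jets of $\T^{1,0}_X$, invoke holomorphic Gelfand--Kazhdan descent to identify this with $\op{desc}^{hol}_X(C^\bu_{red}(\lie{vect}(n)))$, and then finish exactly as in the smooth case (homotopical triviality of the $\lie{vect}(n)$-action on its own Chevalley--Eilenberg complex, Lemma~\ref{lem:silly}, basic forms, and the Gelfand--Fuks identification with $X(n)$).

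There is, however, a genuine error in your proposal, and it stems from a typo in the theorem statement. The shift in the first displayed isomorphism should be $2n$, not $n$: this is forced by the parallelizable case (which has shift $2n$), by the special case $X=\C^n$ stated in the introduction as $H^\bu_{loc}(\cT(\C^n)) \cong H^\bu_{red}(\lie{vect}(n))[2n]$, and by the proof itself, which retains the shift $[2n]$ throughout. Your ``main technical obstacle'' --- collapsing the shift from $2n$ to $n$ via a formal $\dbar$-Poincar\'e lemma, replacing $\Omega^{n,n}_X$ by $K_X$, etc. --- is therefore chasing a phantom. No such reduction occurs, and the mechanism you sketch (the $(0,n)$-factor contributing only in top degree and being absorbed) is not correct: the de Rham complex on the real $2n$-manifold $X$ remains the full smooth de Rham complex, and the passage to holomorphic jets affects only the \emph{coefficient} $D_X$-module, not the degree shift coming from densities.

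A smaller gap: in your third step you pass directly from the descent description to basic forms on $Fr_X$ without mentioning that the Grothendieck connection on $C^\bu_{red}(\lie{vect}(n))$ is homotopically trivial (via Cartan's formula $[\d_{CE},\iota_x]=L_x$). This is the key input that makes Lemma~\ref{lem:silly} applicable and is what the paper means by ``following the exact same logic as in the smooth case.'' You likely intended this to be subsumed under ``closely parallels the proof of Theorem~\ref{thm:smooth},'' but it is worth making explicit, since without it the associated-bundle construction carries a nontrivial flat connection and does not reduce to the product $Fr_X \times^{U(n)} X(n)$.
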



In \S \ref{sec:descent} we specialize to the flat case \(X = \C^n\). 
We extract explicit representatives for cohomology classes.
In this case, the local cohomology reduces to a shift of the Gelfand--Fuks cohomology
\beqn
H^\bu_{loc}(\cT(\C^n)) \cong H_{red}^\bu(\lie{vect}(n))[2n]
\eeqn
by theorem \ref{thm:hol}.
We will describe an explicit quasi--isomorphism
\beqn\label{delta}
\delta \colon C_{red}^\bu(\lie{vect}(n))[2n] \xto{\simeq} \cloc^\bu(\cT(\C^n)) .
\eeqn
The map \(\delta\) is constructed using the method of topological descent.
It utilizes the existence of two types of degree \((-1)\) endomorphisms on the complex of local functionals that we denote \(\eta_i\) and \(\Bar{\eta}_i\).

These operators can be described heuristically as follows.
On \(\C^n\), the local cochain complex \(\cloc^\bu (\cT(\C^n))\) receives an action by the Lie algebra of translations spanned by the constant holomorphic vector fields \(\partial_{z_i}\) and constant anti-holomorphic vector fields~\(\partial_{\zbar_i}\).
The action of this Lie algebra is homotopically trivial.
The operator \(\eta_i\) provides a homotopy for the holomorphic vector field \(\partial_{z_i}\) and \(\Bar{\eta}_i\) provides a homotopy for~\(\partial_{\zbar_i}\).

Using these homotopies, we can give a description of the map \(\delta\) in (\ref{delta}).
Notice that there is a map of Lie algebras \(j : \cT^{hol}(D^n) \to \lie{vect}(n)\) which records the Taylor expansion of a vector field at \(0 \in D^n\).
Here, \(\cT^{hol}(D^n)\) denotes the Lie algebra of holomorphic vector fields on an $n$-disk centered at the origin.

\begin{prop}
When $X = \C^n$, the quasi-isomorphism $\delta$ is defined by $\delta(\phi) = \int \phi^{n,n}$ where $\phi^{n,n}$ is the $\d^n z \d^n \zbar$-component of the expression
\beqn
\exp\left(\sum_{i=1}^n \left(\d \zbar_i \Bar{\eta}_i + \d z_i \eta_i\right)\right) j^*\phi .
\eeqn
\end{prop}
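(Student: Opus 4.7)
The strategy is to work within the de Rham presentation of local functionals from \eqref{derham1}, which realizes $\cloc^\bu(\cT(\C^n))$ as (a shift of) the top-form part of $\Omega^\bu(\C^n, C_{red}^\bu(\op{jet}(\cT(\C^n)[1])))$. Under this identification, the expression $\exp(\sum_i(\d z_i\eta_i + \d \zbar_i \Bar\eta_i)) j^*\phi$ lies naturally in the de Rham complex: each summand $\d z_i\, \eta_i$ has total degree zero (de Rham degree $+1$, internal degree $-1$), so the exponential is a well-defined formal series. Extracting the $(n,n)$-component with respect to the de Rham bigrading produces a top form which, by \eqref{derham1}, corresponds to a local functional; integration then records its class in $\cloc^\bu(\cT(\C^n))$. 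The degree bookkeeping matches, since the $(n,n)$-coefficient involves $n$ copies of each of $\eta_i,\Bar\eta_i$, shifting internal degree by $-2n$, in agreement with the shift $[2n]$ on the source.

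To prepare for the chain-map check, I would first verify the algebra of the homotopies: the operators $\eta_i,\Bar\eta_j$ pairwise (graded) commute on local cochains, so that extracting the $(n,n)$-component is unambiguous, and they satisfy the defining identities $[\d_{CE},\eta_i]=L_{\partial_{z_i}}$ and $[\d_{CE},\Bar\eta_i]=L_{\partial_{\zbar_i}}$ by construction. Next, compute the total differential $D=\d_{CE}+\d_{dR}$ applied to $\exp(\sum_i(\d z_i\eta_i+\d \zbar_i\Bar\eta_i))\, j^*\phi$. Since $j^*\phi$ is a formal germ at the origin, $\d_{dR} j^*\phi = 0$; the differential $\d_{CE}$ commutes past $j^*$ because $j$ is a map of dg Lie algebras; and the failure of $\d_{CE}$ to commute with the exponential produces extra contributions of the form $\d z_i \cdot L_{\partial_{z_i}}$ (and analogously for $\Bar\eta_i$). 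Since $\partial_{z_i}$ corresponds under $j$ to the translation generator $\partial_i \in \lie{vect}(n)$, the action $L_{\partial_{z_i}}j^*\phi$ equals $j^*(L_{\partial_i}\phi)$, and by the Cartan homotopy identity on $\lie{vect}(n)$ this is $j^*((\d_{CE}^{\lie{vect}(n)}\iota_{\partial_i} + \iota_{\partial_i}\d_{CE}^{\lie{vect}(n)})\phi)$. Assembling these identities, the error terms telescope and produce $\delta(\d_{CE}^{\lie{vect}(n)}\phi)$ up to total-derivative pieces that vanish after integration, establishing the chain-map property.

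Given that $\delta$ is a chain map, to confirm it is a quasi-isomorphism I would appeal to Theorem \ref{thm:hol}, which for the parallelizable contractible space $\C^n$ gives the abstract isomorphism $H^k_{loc}(\cT(\C^n))\cong H^{k+2n}_{red}(\lie{vect}(n))$. Both the abstract isomorphism and $\delta$ factor through evaluation at the origin, so it suffices to check they agree on generators; one verifies this by computing $\delta$ on an explicit nonzero representative in $H^\bu_{red}(\lie{vect}(n))$ and recognizing the answer. The main obstacle is the computation in the previous paragraph: tracking signs arising from passing $\d_{CE}$ through the exponential of graded operators, together with the precise combinatorics of the $(n,n)$-extraction, is delicate. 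The essential content is that the inner-derivation contributions $\iota_{\partial_i}$ in $\lie{vect}(n)$ pair with the $\d z_i$ factors produced by $\d_{dR}$ acting on lower-order pieces of the exponential, and it is this cancellation that encodes the descent construction at the algebraic level.
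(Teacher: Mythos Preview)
Your overall strategy---build the full element in the de Rham complex of jets, check it is closed, then invoke theorem \ref{thm:hol}---matches the paper's. But there is a genuine gap in the chain-map verification, and it stems from not distinguishing the two pieces of the internal differential on $C^\bu(\op{jet}(\cT))$.

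The differential $\d_{\cT}$ splits as $\dbar_{\cT} + \d_{CE}$, where $\dbar_{\cT}$ is induced by the $\dbar$-operator on $\cT$ and $\d_{CE}$ encodes only the Lie bracket. The two homotopies trivialize \emph{different} pieces: one has $[\dbar_{\cT},\Bar{\eta}_\ell] = \partial_{\zbar_\ell}$ (Cartan's formula for Dolbeault forms) while $[\d_{CE},\eta_\ell] = L_{\partial_{z_\ell}}$ (Cartan's formula for Lie algebra cochains). Your identity $[\d_{CE},\Bar{\eta}_i]=L_{\partial_{\zbar_i}}$ is not correct as stated; $\Bar{\eta}_i$ is a derivation of the bracket, so its commutator with the bracket-part $\d_{CE}$ vanishes, and the nontrivial commutator is with $\dbar_{\cT}$. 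Correspondingly, the paper organizes the argument into two separate families of descent equations,
\[
\dbar\,\phi^{i,j} + \dbar_{\cT}\,\phi^{i,j+1}=0, \qquad \partial\,\phi^{i,j} + \d_{CE}\,\phi^{i+1,j}=0,
\]
each solved by the appropriate homotopy.

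Relatedly, your claim that $\d_{dR}\,j^*\phi = 0$ is not right: $\phi^0 = j^*\phi$ is explicitly \emph{not} flat (the paper says so just before proposition \ref{prop:gfdescent}), since the de Rham differential here is the Grothendieck connection on jets, not the naive exterior derivative on a constant section. The entire purpose of the descent construction is to correct this failure of flatness term by term. Once the split is made and the two commutator identities above are used, the closedness of $\Phi$ follows by a short direct computation, with no need for the telescoping argument or the auxiliary appeal to $\iota_{\partial_i}$ on $\lie{vect}(n)$ that you sketch.
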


\section{Local cohomology of smooth vector fields}

In this section we prove theorem \ref{thm:smooth}.
Throughout this section $M$ is a smooth oriented manifold of dimension~$d$.
In this section, we prove the following.


\subsection{Gelfand--Kazhdan formal geometry}

Before proving this theorem we recollect some facts about the formal geometry in the style of Gelfand and Kazhdan \cite{GKF,BKformal,KapranovRW,KapranovRW}.
For more details on the specific notations used here we refer to \cite[part I]{GGW}.

Given a smooth manifold $M$ of dimension $d$, its coordinate space $M^{coor}$ is the $\infty$-dimensional (pro) manifold parametrizing jets of smooth coordinates on $M$.
A point in $M^{coor}$ consists of a point $p \in M$ together with an $\infty$-jet class of a local diffeomorphism $\phi \colon U \subset \R^d \to X$ sending a neighborhood $U$ of the origin to a neighborhood of $p$ with the property that $\phi(0) = p$.

The canonical map $M^{coor} \to M$ endows $M^{coor}$ with the structure of a principal bundle for the pro-Lie group of automorphisms of the formal $d$-disk $\op{Aut}_d$.
Additionally, there is a transitive action of the Lie algebra of vector fields $\lie{vect}(d)$ on the formal $d$-disk on $M^{coor}$.
This action defines the so-called Grothendieck connection one-form
\beqn
\omega^{coor} \in \Omega^1(M^{coor}) \otimes \lie{vect}(d)
\eeqn
which satisfies the flatness equation
\beqn\label{eqn:mc1}
\d \omega^{coor} + \frac12 [\omega^{coor},\omega^{coor}] = 0 .
\eeqn
Notice that the Lie algebra of $\op{Aut}_d$ is strictly smaller than the Lie algebra of vector fields on the formal disk, so this is not simply a flat principal $\op{Aut}_d$-bundle.
From objects, like sheaves, on the coordinate bundle $M^{coor}$ one can use this flat connection in order to obtain localized objects on the original manifold $M$.

The Jacobian of an automorphism of the $d$-disk determines a map of Lie groups $\op{Aut}_d \to GL(d)$ whose kernel is contractible.
This means that instead of working with the coordinate bundle $M^{coor}$ it suffices to work with the principal $U(d)$-bundle of frames $Fr^{\C}_M$ of the complexified tangent bundle.
The choice of a formal exponential allows one to pull back the Grothendieck connection from the full coordinate bundle to obtain a one-form $\til \omega^{coor} \in \Omega^1(Fr^{\C}_M) \otimes \lie{vect}(d)$.
Any two such choices of a formal exponential result in gauge equivalent connection one-forms.

The one-form $\til \omega^{coor}$ endows the frame bundle with the structure of a principal bundle for the pair $(\lie{vect}(d), U(d))$, see \cite{GGW}.

Now, any complex $U(d)$-representation $V$ determines a vector bundle $V_X$ on $M$ by the associated bundle construction
\beqn\label{borel}
V_M \define {\rm Fr}^\C_M \times^{U(d)} V .
\eeqn
The space $\Omega^k(M, V_M)$ of $k$-forms valued in $V_M$ is isomorphic to the space of basic $k$-forms on ${\rm Fr}_M$:
\beqn
\Omega^k({\rm Fr}_M , V)_{basic} \define \left\{\alpha \in \left(\Omega^k({\rm Fr}^\C_M) \otimes V\right)^{U(d)} \; | \; \iota_{\xi_A} \alpha = 0 \; , \; {\rm for\;all} \; A \in \lie{gl}(d) \right\}.
\eeqn
We have denoted by $\xi_A$ the vertical vector field on ${\rm Fr}^\C_M$ corresponding to~$A \in \lie{gl}(d)$. 

We consider $U(d)$-representations which have the compatible structure of a module for the Lie algebra $\lie{vect}(d)$ of vector fields on the formal $d$-disk.
Compatible means the following.
After choosing a formal coordinate, we have an embedding of Lie algebras $i : \lie{gl}(d) \to \lie{vect}(d)$, where the $d\times d$ matrix $(a_{ij})$ is realized by the vector field $\sum_{ij} a_{ij} t_i \partial_{t_j}$.
We require that the composition 
\[
\lie{gl}(d) \xto{i} \lie{vect}(d) \xto{\rho_{\lie{w}}} {\rm End}(V) 
\]
is equal to ${\rm Lie}(\rho_{U})$. 
Here $\rho_{\lie{w}}$ denotes the action of $\lie{vect}(d)$ and $\rho_{GL}$ is the original action of $U(d)$. 
Such a structure on $V$ is referred to as a {\em Harish-Chandra} module for the  pair $(\lie{vect}(d), U(d))$ in \cite{GGW}.
In what follows we will utilize a derived version of a Harish--Chandra module.
That is, we will take $V$ to be a cochain complex for which both representations $\rho_{\lie{w}}$ and $\rho_{U}$ commute with the differential.

From such data, the one-form $\omega^{coor}$ induces a connection on $V_X$ defined by
\beqn
\nabla_V \define \d + \rho_{\fw} (\omega_{\rm Groth}) .
\eeqn
The Maurer--Cartan equation (\ref{eqn:mc1}) immediately implies that this connection is flat. 
In other words, $\nabla_V$ endows sections of $V_X$ with the structure of a smooth $D_M$-module.
The resulting $D_M$-module is denoted $\op{desc}_M(V)$.
We recall that associated to any $D_M$-module $\cV$ is the dg $\Omega^\bu_M$-module
\beqn
\Omega^\bu(M, \cV) 
\eeqn
called the de Rham complex of $\cV$.

As an example, consider the algebra $\cO_d$ of functions on the formal $d$-disk; that is, the algebra $\C[[t_1,\ldots,t_d]]$ of power series in $d$ variables.
In this case, $\op{desc}_M(\cO_d)$ is the $D_M$-module of $\infty$-jets $\cJ_M = \op{jet}(M)$ of functions on $M$.
The algebra of flat sections of this $D_M$-module is simply the algebra of smooth functions $C^\infty(M)$; in other words, there is a quasi-isomorphism
\beqn
C^\infty (M) \xto{\simeq} \Omega^\bu \left(M , \op{desc}_M(\cO_d)\right) .
\eeqn

Another example is the Harish--Chandra module
\beqn
\lie{vect}(d) = \C[[t_1,\ldots,t_d]] \{\del_{t_i}\} .
\eeqn
itself, where the action of $\lie{vect}(d)$ is the adjoint action.
In this case $\op{desc}_M(\lie{vect}(d))$ is the $\infty$-jet bundle $\op{jet}(\T_M)$ associated to the tangent bundle of $M$.
The space of flat sections of this $D_M$-module is simply the space of smooth vector fields on $M$.

We will use the following basic lemma.

\begin{lem}
\label{lem:silly}
Suppose that $(V,\d)$ is a dg $(\lie{vect}(d),U(d))$-module structure wherethe $\lie{vect}(d)$-action is homotopically trivial.
Then there is a quasi-isomorphism of $\Omega^\bu_M$-modules
\beqn
\Omega^\bu(M, \op{desc}_M(V)) \simeq \left(\Omega^\bu(M) \otimes_\C V\right)_{basic} .
\eeqn
\end{lem}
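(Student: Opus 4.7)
The plan is to realize both sides as subcomplexes of the same graded vector space on the complexified frame bundle $Fr_M^\C$ and then compare their differentials using the nullhomotopy of the $\lie{vect}(d)$-action. By the basic-forms presentation of the de Rham complex used throughout the Gelfand--Kazhdan formalism recalled above, the left-hand side is
\[
\bigl(\Omega^\bu(Fr_M^\C) \otimes V\bigr)_{basic}
\]
equipped with $D = d_{Fr} + d_V + \rho_\fw(\til\omega^{coor})$, while the right-hand side is the same underlying graded space equipped with $D_0 = d_{Fr} + d_V + \rho_U(\til\omega^{coor}_0)$, in which only the $\lie{gl}(d)$-component of $\til\omega^{coor}$ (corresponding to the standard $U(d)$-connection on $V_M = Fr_M^\C \times^{U(d)} V$) appears. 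The difference $D - D_0 = \rho_\fw(\til\omega^{coor}_+)$ is valued in the image of $\lie{vect}(d)/\lie{gl}(d)$ under $\rho_\fw$, which is precisely the piece the hypothesis trivializes.

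Concretely, I would extract from the hypothesis a $U(d)$-equivariant degree $-1$ operator $\eta : \lie{vect}(d) \to \End(V)$ with $[d_V, \eta(X)] = \rho_\fw(X)$ and $\eta|_{\lie{gl}(d)} = 0$, obtained by averaging any given nullhomotopy over the compact group $U(d)$ and subtracting the $\lie{gl}(d)$-part. Contracting with $\til\omega^{coor}_+$ produces an operator $H := \eta(\til\omega^{coor}_+)$ of total degree zero on $\Omega^\bu(Fr_M^\C) \otimes V$, and I would consider the gauge transformation $\Phi := \exp(H)$. The Maurer--Cartan equation $d_{Fr}\til\omega^{coor} = -\tfrac12[\til\omega^{coor},\til\omega^{coor}]$ together with $[d_V, \eta] = \rho_\fw$ would then show, order-by-order, that $\Phi$ intertwines $D$ and $D_0$ up to terms of strictly lower weight in the natural grading on $V$ coming from the dilation action on the formal disk.

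To upgrade this to a quasi-isomorphism I would filter both complexes by this weight grading and compare spectral sequences. On both sides the $E_0$-differential is $d_V$, so $E_1 = (\Omega^\bu(Fr_M^\C) \otimes H^\bu(V, d_V))_{basic}$, and the $E_1$-differential involves only the induced $\lie{vect}(d)$-action on $H^\bu(V, d_V)$. The nullhomotopy forces this induced action to vanish, so on both sides the $E_1$-differential reduces to $d_{Fr} + \rho_U(\til\omega^{coor}_0)$, and $\Phi$ descends to the identity on $E_1$. Standard convergence arguments for a complete bounded-below filtration then give the quasi-isomorphism.

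The main obstacle I anticipate is the convergence of $\Phi$ and of the spectral sequence: in general $V$ is pro-finite and $\eta$ is not strictly nilpotent, so one must ensure that the weight filtration on $V$ is exhaustive with finite-dimensional successive quotients on which $\eta$ strictly decreases weight. This is automatic in the Harish--Chandra modules relevant to the rest of the paper (for instance $V = \op{Lag}(L[1])$, which is a filtered colimit of finite-dimensional jet-order pieces), but establishing it cleanly in the generality of the lemma statement is the one place where care is required.
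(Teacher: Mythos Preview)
The paper does not prove this lemma; it is stated as a ``basic lemma'' and invoked without justification, so there is no argument to compare against and your proposal has to stand on its own.

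Your overall strategy is the natural one, but there is a genuine error in the construction of $\eta$. You ask for $[d_V,\eta(X)]=\rho_\fw(X)$ for all $X\in\lie{vect}(d)$ together with $\eta|_{\lie{gl}(d)}=0$. These two conditions force $\rho_\fw(A)=0$ for every $A\in\lie{gl}(d)$, i.e.\ the infinitesimal $U(d)$-action on $V$ is strictly zero. That is neither part of the hypothesis nor true in the one application the paper makes, $V=C^\bu_{red}(\lie{vect}(d))$, where $U(d)$ acts nontrivially through the coadjoint action on cochains. ``Subtracting the $\lie{gl}(d)$-part'' of a given nullhomotopy simply breaks the homotopy equation on that subspace; it does not produce a nullhomotopy of the complementary piece. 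Consequently your $\Phi=\exp(H)$ will not intertwine $D$ and $D_0$, and in fact your $D_0=d_{Fr}+d_V+\rho_U(\til\omega^{coor}_0)$ need not square to zero on basic forms: the $\lie{gl}(d)$-component $\til\omega^{coor}_0$ of the Grothendieck connection is an ordinary (non-flat) principal connection on $Fr_M^\C$, and its curvature acts through the nontrivial $\rho_U$.

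Your spectral-sequence paragraph is much closer to a correct proof and does not actually require $\eta|_{\lie{gl}(d)}=0$. Filtering both sides by de Rham degree gives $E_1$ with coefficients in $H^\bu(V,d_V)$, on which the $\lie{vect}(d)$-action, and hence (by connectedness of $U(d)$) the $U(d)$-action, is strictly trivial; so both $E_1$ pages are $\Omega^\bu(M)\otimes H^\bu(V)$ with the de Rham differential. What is still missing is a filtered chain map between the two complexes to which the comparison theorem can be applied---you intended $\Phi$ to play this role. The clean repair is to keep the full homotopy $\eta$ with no truncation and set $H=\eta(\til\omega^{coor})$: then $\eta$ itself supplies the ``contraction in the $V$-direction'' needed to make sense of basic forms and of $D_0$, exactly as contraction by fundamental vector fields does in the model case $V=\Omega^\bu(Y)$ underlying the identification $(\Omega^\bu(Fr_M^\C)\otimes\Omega^\bu(Y))_{basic}\cong\Omega^\bu(Fr_M^\C\times^{U(d)}Y)$ that the paper uses immediately after the lemma.
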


\subsection{Proof of the theorem}

As recounted in equation (\ref{derham1}), there is a quasi-isomorphism of the local cohomology of a local Lie algebra $\cL$ with the shift of the de Rham complex of the $D_X$-module $C^\bu_{red}(\op{jet} (L))$.
Applied to the sheaf of smooth vector fields this reads
\beqn
\label{eqn:deRhamsmooth}
\cloc^\bu(\Vect) \; \simeq \; \Omega^\bu \bigg( M \; , \; C^\bu_{red} \left( \op{jet} (\T_M) \right) \bigg) [d] .
\eeqn

For $V$ a module for the Harish--Chandra pair $(\lie{vect}(d), U(d))$, Gelfand--Kazhdan descent along $M$ yields the $D_M$-module $\op{desc}_M(V)$. 
In the case that $V = \lie{vect}(d)$, the adjoint $\lie{vect}(d)$-module, we have seen that the $D_M$-module $\op{desc}_M(\lie{vect}(d))$ is the $D_M$-module~$\op{jet}(\T_M)$ of $\infty$-jets of the tangent bundle.

We now consider the $(\lie{vect}(d), U(d))$-module $C_{red}^\bu(\lie{vect}(d))$ where $\lie{vect}(d)$ acts in degree one through the co-adjoint action and is extended to the full dg algebra by the condition that it acts through graded derivations.
We make use of the fact that taking $\infty$-jets is a symmetric monoidal functor.
Indeed, by \cite[proposition A.2]{GGbroids}, there is a string of isomorphisms of $D_M$-modules
\beqn
\op{jet} C_{red}^\bu (\T_M) = \op{desc}_M(C_{red}^\bu(\lie{vect}(d))) \cong C_{red}^\bu(\op{desc}_M(\lie{vect}(d))) = C_{red}^\bu(\op{jet} (\T_M)).
\eeqn

To summarize, we see that the Gelfand-Kazhdan descent of the $(\lie{vect}(d), U(d))$-module $C_{red}^\bu(\lie{vect}(d))$ is the $D_M$-module $C_{red}^\bu(\op{jet} (\T_M))$.
This is precisely the $D_M$-module present in the equivalence with the complex computing the local cohomology of $\Vect$, see \eqref{eqn:deRhamsmooth}.

Combining these facts, we obtain a quasi-isomorphism of sheaves on $X$:
\beqn
\cloc^\bu(\Vect) \; \simeq \; \Omega^\bu\bigg(M , \op{desc}_M\left(C_{red}^\bu(\lie{vect}(d))\right)\bigg) [d] .
\eeqn

The interpretation via descent will allow us to simplify this Rham complex.
Suppose that $\fg$ is any Lie algebra.
Then $\fg$ acts on itself (and its dual) via the (co)adjoint action. 
This extends to an action of $\fg$ on its Chevalley--Eilenberg complex $C^\bu(\fg ; M)$, where $M$ is any $\fg$-module via the formula
\beqn\label{eqn:liederiv}
(L_x \varphi) (x_1,\ldots, x_k) = \sum_i \varphi(x_1,\ldots, \op{ad}_x(x_i), \ldots,x_k) - x \cdot \varphi(x_1,\ldots,x_k) .
\eeqn
Here, $x, x_i \in \fg$ and $\varphi$ is a $k$-cochain with values in $M$.
The symbol $\op{ad}$ denotes adjoint action, and the $\cdot$ is the $\fg$-action on $M$. 
The same formula holds for the reduced cochains.
The case of $\lie{g} = \lie{vect}(d)$ is the case we are considering here (together with the compatible $GL(d)$ action).

For any $\lie{g}$-module $N$, the Chevalley--Eilenberg complex $C^\bu(\lie{g} ; N)$ is an explicit model for the derived functor of taking invariants; therefore the $\lie{g}$-action on this complex is homotopically trivial.
Explicitly, the action by any fixed element $x\in \fg$ on $C^\bu(\fg ; N)$ can be trivialized in the following way.
Define the endomorphism $i_x$ of cohomological degree $-1$ acting on a Chevalley--Eilenberg cochain $\varphi$ via the formula
\beqn
(\iota_x \varphi) (x_1,\ldots, x_{k-1}) = \sum_i \pm \varphi (x_1,\ldots, x_i, x , x_{i+1}, \ldots, x_{k-1}) .
\eeqn
Then, from Cartan's formula
\beqn
[\d_{CE}, \iota_x] = L_x
\eeqn
it follows that $i_x$ is the desired trivialization.

Applied to the case at hand, we see that $\lie{vect}(d)$ acts homotopically trivially on $C_{red}^\bu(\lie{vect}(d))$ which implies that $\op{desc}_X(C_{red}^\bu(\lie{vect}(d))) \cong C_{red}^\bu(\op{jet}(\T_M))$ is quasi-isomorphic to the trivial $D_M$-module with fiber $C_{red}^\bu(\lie{vect}(d))$.
Equivalently, this means that the flat connection on $C_{red}^\bu(\op{jet}(\T_M))$ is gauge equivalent to the trivial connection.
From lemma \ref{lem:silly} there is a quasi-isomorphism of de Rham complexes
\beqn
(\Omega^\bu\bigg(M ,  C_{red}^\bu\left(\op{jet}(\T_M)\right)\bigg) \simeq \left(\Omega^\bu_M \otimes_\C C^\bu_{red}(\lie{vect}(d))\right)_{basic} .
\eeqn

Suppose that $Y$ is a smooth manifold equipped with a $U(n)$-action.
Then, basic forms provide a model for the de Rham cohomology of $Fr^\C_M \times^{U(n)} Y$:
\beqn\label{eqn:basicsmooth}
\Omega^\bu(Fr^\C_M \times^{U(n)} Y) \cong (\Omega^\bu(Fr^\C_M) \otimes \Omega^\bu(Y))_{basic} .
\eeqn
Now, we recall a characterization of the Lie algebra cohomology of $\lie{vect}(d)$.
Let $X(d)$ be the restriction of the universal $U(n)$ bundle $EU(n) \to BU(n)$ over the $2d$-skeleton of $BU(d)$.
A famous theorem of Gelfand and Fuks \cite{GF1} states a quasi-isomorphism $C_{Lie}^\bu(\lie{vect}(d)) \simeq \Omega^\bu(X(d))$.
Applying this to the case $Y=X(d)$ in equation \eqref{eqn:basicsmooth} recovers the desired result.


\section{Local cohomology of holomorphic vector fields}
\label{sec:holomorphic}

Next we turn to the cohomology of the local Lie algebra of holomorphic vector fields on a complex manifold.
In this section, we will prove theorem \ref{thm:hol}.

\subsection{Holomorphic descent}

The method of proof is completely parallel to the smooth case above.
We remark on some important features present in the holomorphic case, we refer to \cite{GGW} for more details.

For $X$ a complex manifold, $X^{coor}$ will now denote the \textit{holomorphic} coordinate bundle which consists of pairs $(x,\phi)$ where $x \in X$ and $\phi$ is the $\infty$-jet of a local holomorphic diffeomorphism $\phi  \colon U \subset \C^n \to X$ with the property that $\phi(0) = x$.
The bundle $X^{coor} \to X$ is a principal $\op{Aut}_n$-bundle which is equipped with a transitive action of $\op{vect}(n)$ as before.
The Grothendieck connection is now a holomorphic one-form $\omega^{coor} \in \Omega^{1,hol}(X^{coor}) \otimes \lie{vect}(n)$ which satisfies 
\beqn
\del \omega^{coor} + \frac12 [\omega^{coor},\omega^{coor}] = 0.
\eeqn

Given a $C^\infty$ section of $X^{coor} \to X^{coor} / U(n)$ (a formal exponential) we obtain a $\lie{vect}(n)$-valued one-form $\til\omega^{coor} \in \Omega^1(\op{Fr}_X) \otimes \lie{vect}(n)$ on the $U(n)$-frame bundle of $X$.
This endows the frame bundle with the structure of a principal bundle for the pair $(\lie{vect}(n),U(n))$.
In coordinates one simply has $\omega^{coor} = \d z_i \otimes \frac{\del}{\del x_i}$ where $z$ is a coordinate on $X$ and $x$ is a formal coordinate.

Fix a $(\lie{vect}(n),U(n))$-module $(V,\rho_{\lie{w}})$ and let $V_X$ be the associated bundle $V_X = \op{Fr}_X \times^{U(n)} V$.
We have the flat connection
\beqn
\nabla_V^{flat} = \d + \rho_{\fw} (\omega^{coor}) 
\eeqn 
on $V_X$.
Moreover, since $V_X$ is a holomorphic bundle over $X$, there is a quasi-isomorphism of sheaves
\beqn
\left(\Omega^{\bu, hol} (X , V_X) \; , \; \nabla_V \right) \xto{\simeq}\left(\Omega^{\bu, \bu} (X , V_X) \; , \; \nabla_V^{flat} \right) 
\eeqn 
where $\nabla_V = \dbar -  \nabla_V^{flat}$.
Here $\dbar$ is the $\dbar$-operator for $V_X$.
The operator $\nabla_V$ endows $\cV^{hol}_X = \Gamma^{hol}(X,V_X)$ with the structure of a $D^{hol}_X$-module, where $D^{hol}_X$ is the algebra of holomorphic differential operators.
We refer to this $D^{hol}_X$-module as $\op{desc}_X^{hol}(V)$.


\subsection{Proof of the theorem}

As recounted in equation (\ref{derham1}), there is a quasi-isomorphism of the local cohomology of a local Lie algebra $\cL$ with the de Rham complex of the $D_X$-module $C^\bu_{red} (\op{jet} (L))$.
Applied to the local Lie algebra $\cT$ on the complex $n$-fold $X$, this reads:
\beqn
\cloc^\bu(\cT) \; \simeq \; \Omega^\bu \bigg( X \; , \; C^\bu_{red} \left( \op{jet} (\cT) \right) \bigg) [2n] .
\eeqn





As sheaves, we know $\cT$ is a resolution for the sheaf of holomorphic vector fields on the complex manifold $X$. 
Similarly, there is a quasi-isomorphism of (smooth) $D_X$-modules $\op{jet}(\cT) \simeq \op{jet}^{hol} (\T^{1,0})$, where $\op{jet}^{hol}(\T^{1,0})$ denotes the holomorphic bundle of holomorphic $\infty$-jets of the holomorphic tangent bundle. 
It follows that there is a quasi-isomorphism of de Rham complexes
\beqn\label{smoothhol}
\Omega^\bu \bigg( X \; , \; C^\bu_{red} \left( \op{jet} (\cT) \right) \bigg) \; \simeq \; \Omega^\bu \bigg(X \; , \; C_{red}^\bu\left(\op{jet}^{hol} (\T^{1,0}) \right) \bigg) .
\eeqn
On the right-hand side we emphasize that we take {\em holomorphic} jets.

If $V$ is a module for the pair $(\lie{vect}(n), U(n))$, then holomorphic Gelfand-Kazhdan descent along the complex manifold $X$ yields a $D^{hol}_X$-module $\op{desc}^{hol}_X(\cV)$. 
In the case that $V$ is the adjoint module $\lie{vect}(n)$, the $D^{hol}_X$-module $\op{desc}_X(\cV)$ is equivalent to the $D^{hol}_X$-module $\op{jet}^{hol} (\T^{1,0})$. 

We now consider the $(\lie{vect}(n), U(n))$-module $C_{red}^\bu(\lie{vect}(n))$. 
By \cite[proposition A.2]{GGbroids}, there is a string of isomorphisms of $D^{hol}_X$-modules
\beqn
\op{jet}^{hol} C_{red}^\bu (\T^{1,0}_X) = \op{desc}(C_{red}^\bu(\lie{vect}(n))) \cong C_{red}^\bu(\op{desc}(\lie{vect}(n))) = C_{red}^\bu({\rm J}^{hol} \T^{1,0}_X),
\eeqn
where ${\rm J}^{hol} \T^{1,0}_X$ is the bundle of holomorphic $\infty$-jets of $\T^{1,0}_X$.

To summarize, we see that the Gelfand-Kazhdan descent of the $(\lie{vect}(n), U(n))$-module $C_{red}^\bu(\lie{vect}(n))$ is equal to the $D_X$-module $C_{red}^\bu(\op{jet}^{hol} \T^{1,0})$.
From (\ref{smoothhol}), this is precisely the $D_X$-module present in the definition of the local cohomology of $\cT$.

Combining these facts, we obtain a quasi-isomorphism of sheaves on $X$:
\beqn
\cloc^\bu(\cT) \; \simeq \; \Omega^\bu\bigg(X , \op{desc}_X\left(C_{red}^\bu(\lie{vect}(n))\right)\bigg) [2n] .
\eeqn
Following the exact same logic as in the smooth case yields the desired result.

\section{Cohomology of formal vector fields}
\label{sec:gf}

We have seen that the local cohomology of smooth or holomorphic vector fields is largely determined by the cohomology of \textit{formal} vector fields.
In this section we collect some facts about the cohomology of formal vector fields which will allow us to construct explicit models for representatives of local cohomology classes.

As before, $\lie{vect}(n)$ is the Lie algebra of formal vector fields on the $n$-disk, and let $\Omega^p$ be the $\lie{vect}(n)$-module of formal differential $p$-forms on the $n$-disk.
Thus $\cO = \Omega^0 = \C[[x_1,\ldots,x_n]]$ and $\Omega^p$ is freely generated over $\cO$ by formal symbols $\d x_{i_1} \wedge \cdots \wedge \d x_{i_p}$.
The formal de Rham differential $\d_{dR} \colon \Omega^p \to \Omega^{p+1}$ is defined in the obvious way as is the interior product $\iota_X \colon \Omega^p \to \Omega^{p-1}$, where $X \in \lie{vect}(n)$. 
The $\lie{vect}(n)$-module structure on $\Omega^p$ is through the Lie derivative $L_X \colon \Omega^p \to \Omega^p$ where $L_X = \d_{dR} \iota_X + \iota_X \d_{dR}$.

We recall Gelfand--Fuks cohomology with coefficients in formal differential forms.
For each $p$ we define the cochain complex
\beqn
C^\bu(\lie{vect}(n) ; \Omega^p)
\eeqn
which in degree $q$ consists of continuous linear maps $\varphi \colon \wedge^q \lie{vect}(n) \to \Omega^p$.
The differential is $\d_{CE}^p \colon C^q \to C^{q+1}$ is defined by
\begin{multline}\label{eqn:CEp}
(\d_{CE}^{p} \varphi)(X_0,\ldots,X_q) = \sum_{i<j} (-1)^{i+j} \varphi([X_i,X_j], X_1,\ldots,\Hat{X}_i, \ldots, \Hat{X}_j, \ldots, X_q) \\ + \sum_i (-1)^{i+1} L_{X_i} \varphi(X_0,\ldots,\Hat{X}_i, \ldots,X_q) .
\end{multline}
We write 
\beqn
\d_{CE}^p = \d_{\lie{w}} \otimes \id_{\Omega^p} - L
\eeqn
where the first summand is described in the first line above and the second summand is described by the (negative of) the second line.

To obtain expressions for representatives of cohomology classes in $H^\bu(\lie{vect}(n);\C)$ we will use the double complex
\beqn
C^\bu \left(\lie{vect}(n) ; \Omega^\bu \right)
\eeqn
where the horizontal differential is $\d_{CE}$ and the vertical differential is $\d_{dR}$. 
The totalized differential on the right hand side is a sum of two terms
\beqn
\d = \d_{CE} + \d_{dR} .
\eeqn
The inclusion of constant functions $\C \hookrightarrow \Omega^\bu$ is $\lie{vect}(n)$-equivariant, hence induces a map
\beqn
C^\bu(\lie{vect}(n);\C) \to \op{Tot} \, C^\bu \left(\lie{vect}(n) ; \Omega^\bu \right) .
\eeqn
By the formal Poincar\'{e} lemma, this map is a quasi-isomorphism.
The main goal of the remainder of this section is to construct a quasi-inverse to this map.

\subsection{Cohomology with coefficients in differential forms}

We summarize a description of the bigraded cohomology ring
\beqn
H^\bu(\lie{vect}(n) ; \Omega^\bu) = \oplus_q \oplus_p H^q (\lie{vect}(n) ; \Omega^p) .
\eeqn
in terms of formal characteristic classes first given in \cite{GFcoefficients}.

For each $k \geq 0$ there is a decreasing sequence of subalgebras
\beqn
\lie{vect}(n) \supset \lie{vect}(n)_{0} \supset \lie{vect}(n)_{1} \supset \cdots
\eeqn
where
\beqn
\lie{vect}(n)_{k} = \{\sum_i f_i \del_i \; | \; f_i \in \lie{m}^{k+1} \C[[z_1,\ldots,z_n]] \} \subset \lie{vect}(n) ,
\eeqn
is the subalgebra of vector fields which vanish up to order $k$.

The formal Jacobian is the linear map
\beqn
J \colon \lie{vect}(n) \to \lie{gl}(n)[[z_1,\ldots,z_n]]
\eeqn
defined by sending a vector field $X = \sum_i f_i \del_i$ to the matrix of power series $JX$ whose $ij$ component is $\del_i f_j$.
There is map of Lie algebras
\beqn
J(0) \colon \lie{vect}(n)_{0} \to \lie{gl}(n)
\eeqn
defined by evaluating the formal Jacobian at zero $X \mapsto JX(0)$.
This defines an isomorphism $\lie{vect}(n)_{0} \slash \lie{vect}(n)_{1} \simeq \lie{gl}(n)$ with inverse that sends a matrix $A$ to the vector field $\sum A_{ij} z_i \del_j$.
The map $J(0)$ allows us to restrict any $\lie{gl}(n)$-representation to one for $\lie{vect}(n)_{0}$.

Given any $\lie{vect}(n)_{0}$-module $M$ we induce along $\lie{vect}(n)_{0} \subset \lie{vect}(n)$ to obtain the $\lie{vect}(n)$-module
\beqn
\til M = \Hom_{U \lie{w}_{n,0}} (U \lie{vect}(n), M) .
\eeqn
In the case that $M = \wedge^p (\C^n)^*$ the corresponding induced module is $\til M = \Omega^p$.
In particular we see that there is an isomorphism of cochain complexes
\beqn
C^\bu(\lie{vect}(n) ; \Omega^n) \simeq C^\bu(\lie{vect}(n)_{0} ; \wedge^p (\C^n)^*) .
\eeqn 
In fact, this induces an isomorphism of bigraded rings $C^\bu(\lie{vect}(n) ; \Omega^\bu) \simeq C^\bu(\lie{vect}(n)_{0}; \wedge^\bu (\C^n)^*)$.

The embedding $\lie{gl}(n) \hookrightarrow \lie{vect}(n)_{0}$ induces an isomorphism
\beqn
C^\bu(\lie{vect}(n) ; \cO) \simeq C^\bu (\lie{vect}(n)_{0} ; \C) \simeq C^\bu(\lie{gl}(n) ; \C) .
\eeqn
And so the cohomology with coefficients in $\cO$ is an exterior algebra
\beqn
H^\bu(\lie{vect}(n) ; \cO) \simeq \C[a_1,\ldots,a_n]
\eeqn
where $a_i$ is of degree $2i-1$.
Explicitly, a representative for $a_i$ is
\beqn
a_i \colon X_0,\ldots,X_{2n} \mapsto \op{Tr}(JX_0 \cdots J X_{2n}) .
\eeqn

\begin{thm}[\cite{GFcoefficients}]\label{thm:coefficients}
The bigraded ring
\beqn
H^\bu(\lie{vect}(n) ; \Omega^\bu)
\eeqn
is a graded polynomial algebra on generators $a_1,\ldots,a_n$ where $a_i$ is bidegree $(0,2i-1)$ and generators $\tau_1,\ldots,\tau_n$ where $\tau_j$ is bidegee $(j,j)$ modulo the relation
\beqn
\tau_1^{\ell_1} \tau_2^{\ell_2} \cdots \tau_n^{\ell_n} = 0, \quad \text{for} \quad \ell_1 + 2 \ell_2 + \cdots n \ell_n > n .
\eeqn
Explicit cochain representatives are as follows.
\begin{itemize}
\item A representative for $a_i$ is
\beqn
a_i \colon X_0,\ldots,X_{2n} \mapsto \op{Tr}(JX_0 \cdots J X_{2n}) ,
\eeqn
where $JX_i \in \lie{gl}(n)[[z_1,\ldots,z_n]]$ is the formal Jacobian.
\item A representative for $\tau_i$ is 
\beqn
\tau_i \colon X_1,\ldots,X_n \mapsto \op{Tr}( \d J X_1 \wedge \cdots \wedge \d J X_n ) ,
\eeqn
where $\d J X_i \in \lie{gl}(n) \otimes \Omega^1$ denotes the matrix of formal one-forms.
\end{itemize}
\end{thm}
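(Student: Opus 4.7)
The plan is to reduce the computation to Lie algebra cohomology of the subalgebra $\lie{vect}(n)_0$ with finite-dimensional coefficients, to analyze the Hochschild--Serre spectral sequence associated to the semidirect product decomposition $\lie{vect}(n)_0 \cong \lie{gl}(n) \ltimes \lie{vect}(n)_1$, and finally to recognize the resulting cohomology as the truncated Weil-algebra of $\lie{gl}(n)$ described in the statement.

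First I would invoke the isomorphism of bigraded rings
\[
C^\bu(\lie{vect}(n); \Omega^\bu) \;\cong\; C^\bu(\lie{vect}(n)_0; \wedge^\bu(\C^n)^*)
\]
noted in the paragraph just preceding the theorem. This replaces the infinite-dimensional coefficient system $\Omega^\bu$ by the finite-dimensional $\lie{gl}(n)$-representation $\wedge^\bu(\C^n)^*$, on which the ideal $\lie{vect}(n)_1$ acts trivially. Next I would set up the Hochschild--Serre spectral sequence for the extension $0 \to \lie{vect}(n)_1 \to \lie{vect}(n)_0 \to \lie{gl}(n) \to 0$; because $\lie{gl}(n)$ is reductive, the $E_2$-page takes the form
\[
E_2^{p,q} = H^p\bigl(\lie{gl}(n); H^q(\lie{vect}(n)_1; \C) \otimes \wedge^\bu(\C^n)^*\bigr).
\]

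The computation of $E_2$ then splits into two pieces. The factor $H^\bu(\lie{gl}(n); \C)$ is, by a classical Koszul calculation, the exterior algebra on primitive classes $a_1, \ldots, a_n$ with the explicit cochain representatives $\op{Tr}((JX)^{2i-1})$; these survive unchanged when we transfer through $\lie{gl}(n) \hookrightarrow \lie{vect}(n)_0 \hookrightarrow \lie{vect}(n)$. The inner factor $H^\bu(\lie{vect}(n)_1; \C)$, together with the $\lie{gl}(n)$-action on $\wedge^\bu(\C^n)^*$, contributes the $\lie{gl}(n)$-invariant polynomials in the formal Jacobian, which by Chern--Weil are precisely the Chern-class monomials built from $\tau_j = \op{Tr}((\d JX)^j)$. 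To force the spectral sequence to collapse at $E_2$, I would exhibit explicit $(\d_{CE} + \d_{dR})$-cocycles lifting each $a_i$ and $\tau_j$ (the formulas given in the statement), producing a subring of cohomology that already accounts for every class on the $E_2$-page.

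The hard step is to establish the precise truncation relation $\tau_1^{\ell_1} \cdots \tau_n^{\ell_n} = 0$ for $\ell_1 + 2\ell_2 + \cdots + n\ell_n > n$, which is the one ingredient not supplied by generic Chern--Weil theory. This vanishing reflects the finite dimension of the formal $n$-disk: each $\tau_j$ is a trace of a $j$-fold wedge of formal one-forms $\d JX \in \lie{gl}(n) \otimes \Omega^1$, so a monomial $\tau_1^{\ell_1} \cdots \tau_n^{\ell_n}$ is a form of total formal de Rham degree $\sum_j j\ell_j$, which must vanish identically once $\sum_j j\ell_j > n$. What remains is to confirm that the monomials with $\sum_j j \ell_j \leq n$ are linearly independent in cohomology; this one can verify either by matching Poincar\'{e} series of the truncated Weil algebra $W(\lie{gl}(n))/\langle \text{weight} > n \rangle$ against the spectral-sequence prediction, or by restricting to a formal fiber and invoking the algebraic independence of Chern classes in $H^{\leq 2n}(BU(n); \C)$.
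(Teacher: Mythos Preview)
The paper does not supply a proof of this theorem: it is stated with attribution to \cite{GFcoefficients} and no argument follows, so there is no in-paper proof to compare against.

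As an independent sketch of the Gelfand--Fuks argument, your outline is broadly on the right track. The reduction to $\lie{vect}(n)_0$-cohomology with coefficients in $\wedge^\bu(\C^n)^*$ and the use of the reductive splitting $\lie{vect}(n)_0 \cong \lie{gl}(n) \ltimes \lie{vect}(n)_1$ are the standard opening moves, and your argument for the truncation relation is correct at the cochain level: the explicit representative of $\tau_1^{\ell_1}\cdots\tau_n^{\ell_n}$ lands in $\Omega^{\sum j\ell_j}$, which vanishes identically once $\sum j\ell_j > n$.

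There is, however, a genuine gap at the step where you assert that $H^\bu(\lie{vect}(n)_1;\C)$, together with the $\wedge^\bu(\C^n)^*$ factor, ``contributes the $\lie{gl}(n)$-invariant polynomials in the formal Jacobian.'' The continuous cohomology of the pro-nilpotent Lie algebra $\lie{vect}(n)_1$ is not something one can read off by invoking Chern--Weil; determining it as a $\lie{gl}(n)$-module (or even its $\lie{gl}(n)$-invariants after tensoring with $\wedge^\bu(\C^n)^*$) is precisely the substantive computation in the original reference. Without that input you do not know the size of the $E_2$-page, so your collapse argument---exhibiting cocycles that ``already account for every class on the $E_2$-page''---cannot be completed, and you cannot rule out additional generators or relations beyond the obvious truncation. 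Your closing remark about matching Poincar\'e series presupposes exactly this unknown quantity.
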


Recall the space $X(n)$ defined as the restriction of the universal $U(n)$-bundle to the $2n$-skeleton of $Gr(n,\infty)$.
It's cohomology is isomorphic to the cohomology of $\lie{vect}(n)$.
There is a minimal model for the de Rham cohomology of $X(n)$ which takes the form
\beqn
\left(\C[\xi_1,\ldots, \xi_n, c_1,\ldots,c_n] \slash (c_1^{\ell_1} c_2^{\ell_2} \cdots c_n^{\ell_n}) \; , \; \d = c_i \frac{\partial}{\partial \xi_i} \right),
\eeqn
where the relation is imposed for $\ell_1+2 \ell_2 + \cdots + n \ell_n > n$.
Here $\xi_i$ sits in degree $2i-1$ and $c_i$ sits in degree $2i$.
We recognize the $c_i$'s as the universal Chern classes in $n$-dimensions and $H^\bu(U(n)) \simeq \C[\xi_1,\ldots,\xi_n]$. 
The evident relationship to the formal Hodge-to-de Rham spectral sequence on generators is $\xi_i \leftrightarrow a_i$ and $c_i \leftrightarrow \tau_i$.
\subsection{Formal descent}

Define an operator 
\beqn
\iota \colon C^q(\lie{vect}(n) ; \Omega^p) \to C^{q+1} (\lie{vect}(n) ; \Omega^{p-1}) ,
\eeqn
which takes a cochain $\varphi \colon \wedge^q \lie{vect}(n) \to \Omega^p$ to the cochain
\beqn
(\iota \varphi)(X_1,\ldots,X_q) = \sum_k (-1)^{k+1} \iota_{X_k} \varphi(X_1,\ldots,\Hat{X}_k, \ldots,X_q) ,
\eeqn
where, on the right hand side, $\iota_X \colon \Omega^p \to \Omega^{p-1}$ is the contraction with respect to~$X \in \lie{vect}(n)$.

Next we define what is meant by evaluation at zero.
For $\alpha \in C^q(\lie{vect}(n);\Omega^p)$, define $\alpha|_0 \in C^q(\lie{vect}(n);\C)$ as follows.
If $p > 0$ then $\alpha|_0 = 0$ and if $p = 0$ then
\beqn
\alpha|_0 (X_1,\ldots,X_q) = \alpha(X_1,\ldots,X_q)(0) \in \C .
\eeqn

\begin{prop}\label{prop:quasi}
The map
\beqn
\Phi \colon \op{Tot} C^\bu(\lie{vect}(n);\Omega^\bu) \to C^\bu(\lie{vect}(n);\C) 
\eeqn
defined by
\beqn
\Phi (\alpha) = e^{\iota} (\alpha) |_{0} 
\eeqn
is a quasi-isomorphism of commutative dg algebras.
\end{prop}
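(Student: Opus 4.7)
The strategy is to identify $\Phi$ as a retraction of the quasi-isomorphism
\[
i \colon C^\bu(\lie{vect}(n);\C) \hookrightarrow \op{Tot} C^\bu(\lie{vect}(n);\Omega^\bu)
\]
coming from the inclusion of constants, which is a quasi-isomorphism by the formal Poincar\'e lemma recalled just above the proposition. Once $\Phi$ is shown to be a cochain map satisfying $\Phi \circ i = \id$, it follows that $\Phi_*$ is a two-sided inverse of $i_*$ on cohomology, so $\Phi$ is itself a quasi-isomorphism.

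The retract identity $\Phi \circ i = \id$ is immediate. For $\alpha \in C^q(\lie{vect}(n);\C) \subset C^q(\lie{vect}(n);\Omega^0)$, one has $\iota \alpha \in C^{q+1}(\lie{vect}(n); \Omega^{-1}) = 0$, so $e^{\iota}\alpha = \alpha$, and $\alpha|_0 = \alpha$ since $\alpha$ already takes constant values. The commutative dg algebra structure is also handled easily: $\iota$ is a graded derivation of the cup product on the bicomplex $C^\bu(\lie{vect}(n);\Omega^\bu)$ (because contraction with a vector field is a derivation), so $e^\iota$ is an algebra homomorphism; evaluation at the origin is tautologically an algebra map; hence $\Phi$ is a morphism of commutative dg algebras.

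The technical core is the cochain map property $\Phi \circ \d = \d_{\lie{w}} \circ \Phi$, where the source differential is $\d = \d_{\lie{w}} - L + \d_{dR}$ (using the decomposition $\d_{CE}^p = \d_{\lie{w}} - L$) and the target differential is $\d_{\lie{w}}$ (the action of $\lie{vect}(n)$ on $\C$ is trivial). The crucial input is the Cartan identity
\[
[\d_{dR}, \iota] = L,
\]
obtained by applying the pointwise formula $[\d_{dR}, \iota_X] = L_X$ to each term in the definition of $\iota$. A parallel calculation of $[\d_{\lie{w}}, \iota]$, which reduces to shuffling brackets of vector fields, produces correction terms that are supported on forms of positive degree and hence vanish after $|_0$. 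Since $\iota$ lowers form degree by one, only the $\frac{1}{p!}\iota^p$ summand of $e^\iota$ survives on a cochain of form-degree $p$, so the identity can be checked at each bidegree by a finite computation.

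The main obstacle is the verification of the chain map property in the previous paragraph. While the key algebraic input $[\d_{dR}, \iota] = L$ is clean, pushing it through the exponential requires careful sign and combinatorial bookkeeping, particularly in organizing the contribution of the nested commutators $[\iota^p, \d_{\lie{w}}]$ and $[\iota^p, L]$ and showing that all terms which survive evaluation at $0$ reassemble into $\d_{\lie{w}}\Phi(\alpha)$.
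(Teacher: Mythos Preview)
Your overall architecture is the same as the paper's: verify that $\Phi$ is a dg-algebra map, observe $\Phi\circ i=\id$, and use the formal Poincar\'e lemma to conclude. Your observation that $\Phi\circ i=\id$ already forces $\Phi$ to be a quasi-isomorphism (once it is a cochain map) is in fact cleaner than the paper, which goes on to build an explicit Poincar\'e-style homotopy $\Psi$ witnessing $i\circ\Phi\simeq\id$; that extra step is not logically needed.

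The gap is in your sketch of the cochain-map property. The claim that the commutator $[\d_{\lie w},\iota]$ ``produces correction terms that are supported on forms of positive degree and hence vanish after $|_0$'' is not correct. Writing out $[\d_{\lie w},\iota]\alpha$ one finds terms of the shape $\iota_{[X_i,X_j]}\alpha(\ldots)$; after applying the remaining powers of $\iota$ these land in $C^{\bullet}(\cO)$, not in positive form degree, and they do not die under evaluation at $0$. What actually happens is that these bracket terms are cancelled by contributions coming from $\d_{dR}$ via the identity $[L_X,\iota_Y]=\iota_{[X,Y]}$: iterating Cartan's formula on $\iota^{p+1}\d_{dR}\alpha$ produces exactly the $\iota_{[X_i,X_j]}$ terms (with the right signs) together with Lie-derivative terms that cancel against the $-L$ part of $\d_{CE}^p$. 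So the three pieces $\d_{\lie w}$, $-L$, and $\d_{dR}$ conspire; none of them drops out for degree reasons.

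The paper handles this by a device you do not use: since $\Phi$ is already known to be an algebra map and $C^\bullet(\Omega^\bullet)$ is generated as an algebra by $C^\bullet(\cO)$ and $C^0(\Omega^\bullet)$, it suffices to check the cochain identity on those two pieces separately. On $C^\bullet(\cO)$ the check is one line; on $C^0(\Omega^p)$ it is the iterated-Cartan computation just described. This reduction to generators is what makes the bookkeeping tractable, and it is the missing idea in your proposal.
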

\begin{proof}
In this proof we will use the notations
\beqn
C^q(\Omega^p) \define C^q(\lie{vect}(n); \Omega^p) , \quad C^q \define C^q(\lie{vect}(n); \C) ,
\eeqn
and $D^p = \d_{CE}^p + \d_{dR}$ the total differential acting on $C^q(\Omega^p)$.
For $\alpha \in C^q(\Omega^\bu)$ we denote $\alpha|_{\cO} \in C^q(\cO)$ the projection onto the function component of $\alpha$.
Notice that $\alpha|_0 = (\alpha|_{\cO})|_0$.

First, we show that $\Phi$ is a map of graded algebras.
Indeed, suppose that $\alpha \in C^q(\Omega^p), \alpha' \in C^{q'}(\Omega^{p'})$.
Then
\beqn
e^{\iota} \alpha|_{\cO} = \frac{1}{p!} \iota^p \alpha
\eeqn
and similarly for $\alpha'$.
Explicitly, this is the cochain
\beqn
e^{\iota} \alpha|_{\cO} (X_1,\ldots,X_{p+q}) = \sum_{I,J} \iota_{X_{i_1}} \cdots \iota_{X_{i_p}} \alpha(X_{j_1},\cdots,X_{j_q}) 
\eeqn
where the sum is over increasing multi-indices $I = (i_1< \cdots <i_p)$, $J=(j_1<\cdots<j_q)$ with the property that $I \cup J = \{1,\ldots,p+q\}$.
Now,
\beqn
e^{\iota} (\alpha \alpha')|_{\cO} = \frac{1}{(p+p')!} \iota^{p+p'} (\alpha \alpha') .
\eeqn
Explicitly, this is the cochain
\begin{multline}
e^{\iota} (\alpha \alpha')|_{\cO} (X_1,\ldots,X_{p+p'+q+q'}) = \sum_{A,B} \iota_{X_{a_1}} \cdots \iota_{X_{a_{p+p'}}} (\alpha \alpha')(X_{b_1},\cdots,X_{b_{q+q'}}) \\ 
= \sum_{I,J,I',J'} \iota_{X_{i_1}} \cdots \iota_{X_{i_{p}}} \alpha (X_{j_1},\cdots,X_{j_{q}}) \iota_{X_{i'_1}} \cdots \iota_{X_{i'_{p'}}}  \alpha'(X_{j'_1},\ldots,X_{j'_{q'}})
\end{multline}
where the first sum is over increasing multi-indices $A,B$ of length $p+p',q+q'$ respectively.
The second sum is over multi-indices $I,J$ of length $p,q$ respectively and $I',J'$ of length $p',q'$ respectively.
The second line is precisely $(e^{\iota} \alpha|_{\cO} e^{\iota} \alpha'|_{\cO})(X_1,\ldots,X_{p+q+p'+q'})$ as desired.

As a graded algebra $C^\bu(\Omega^\bu)$ is generated by elements in $C^\bu(\cO)$ and $C^0(\Omega^\bu)$.
Thus, to show that $\Phi$ is a cochain map it suffices to show that $\Phi(D \psi) = \d_{\lie{w}} \Phi(\psi)$ for $\psi$ in each of these pieces.

Suppose first that $\varphi \in C^q(\lie{vect}(n) ; \cO)$.
On one hand $\Phi(\varphi) = \varphi|_0$ and so
\beqn
\d_{\lie{w}} \Phi(\varphi) = (\d_{\lie{w}} \otimes \id_{\cO}) \varphi |_0
\eeqn
where $\d_{\lie{w}} \otimes \id_{\cO}$ is the operator described in the first line in equation \eqref{eqn:CEp}.
On the other hand
\begin{align*}
\Phi(D^0 \varphi) & = (\d_{CE}^\cO \varphi + \iota \d_{dR} \varphi)|_0 \\ & = (\d_{CE}^\cO \varphi - L \varphi)|_0 ,
\end{align*}
where $L$ is the operator described in the second line of \eqref{eqn:CEp}.
Since (by definition) $\d_{CE}^\cO = \d_{\lie{w}} \otimes \id_{\cO} - L$, we have shown that $\Phi(D^0 \varphi) = \d_{\lie{w}} \Phi(\varphi)$.

Now, suppose that $\alpha \in C^0(\Omega^p) = \Omega^p$.
Then $\Phi(\alpha) = \frac{1}{p!} \iota^p \alpha|_0 \in C^p$ is the $p$-cochain
\beqn
\Phi(\alpha)(X_0,\ldots,X_{p-1}) = \iota_{X_0} \cdots \iota_{X_{p-1}} \alpha|_0 .
\eeqn
Thus $\d_{\lie{w}} \Phi (\alpha)\in C^{p+1}$ is the $(p+1)$-cochain
\beqn\label{eqn:wdifferential}
\d_{\lie{w}} \Phi (\alpha) (X_0,\ldots,X_p) = \sum_{i<j} (-1)^{i+j} \iota_{[X_i,X_j]}\iota_{X_0} \cdots \Hat{\iota_{X_i}} \cdots \Hat{\iota_{X_j}} \cdots \iota_{X_{p}} \alpha|_0 .
\eeqn

On the other hand, we have
\begin{align*}
\Phi(D^p \alpha) & = \Phi(\d_{CE}^p \alpha) + \Phi(\d_{dR} \alpha) \\
& = \frac{1}{p!} \iota^p \d_{CE}^p \alpha|_0 + \frac{1}{(p+1)!} \iota^{p+1} \d_{dR} \alpha|_0 .
\end{align*}
Now, $\d_{CE}^p \alpha \in C^{1}(\Omega^p)$ is the $\Omega^p$-valued $1$-cochain 
\beqn
(\d_{CE}^p \alpha)(X) = - L_X \alpha .
\eeqn
Thus, $\frac{1}{p!} \iota^p (\d_{CE}^p \alpha)$ is the $(p+1)$-cochain
\beqn\label{eqn:CEp}
\frac{1}{p!} \iota^p (\d_{CE}^p \alpha)(X_0,\ldots,X_p) = \sum_i (-1)^{i} \iota_{X_0} \cdots \Hat{\iota_{X_i}} \cdots \iota_{X_{p-1}} L_{X_i} \alpha .
\eeqn

Next, we read off the $(p+1)$-cochain $\frac{1}{(p+1)!} \iota^{p+1} \d_{dR} \alpha$ as
\beqn
\frac{1}{(p+1)!} \iota^{p+1} \d_{dR} \alpha (X_0,\ldots,X_p) = \iota_{X_0}\cdots \iota_{X_p} \d_{dR} \alpha .
\eeqn
Via Cartan's formula $L_X = \d_{dR} \iota_X + \iota_X \d_{dR}$ we can write this as
\beqn
\iota_{X_0}\cdots \iota_{X_{p-1}} (L_{X_p} - \d_{dR} \iota_{X_p}) \alpha =  \iota_{X_0}\cdots \iota_{X_{p-1}} L_{X_p} \alpha - \iota_{X_0}\cdots \iota_{X_{p-1}}\d_{dR} \iota_{X_p} \alpha .
\eeqn
Iterating, we can use Cartan's formula to remove all explicit appearances of the de Rham differential
\beqn
\frac{1}{(p+1)!} \iota^{p+1} \d_{dR} \alpha (X_0,\ldots,X_p) = \sum_{i} \pm \iota_{X_0} \cdots \iota_{X_{i-1}} L_{X_i} \iota_{X_{i+1}} \cdots \iota_{X_p} \alpha 
\eeqn
Here, we have use the fact that since $\alpha$ is a $p$-form that $\d_{dR} \iota_{X_0} \cdots \iota_{X_p} \alpha = 0$.
We now use the identity $[\iota_X,L_Y] = \iota_{[X,Y]} = [L_X, \iota_Y]$ to place all Lie derivatives on the right.
This becomes
\begin{multline}
\frac{1}{(p+1)!} \iota^{p+1} \d_{dR} \alpha (X_0,\ldots,X_p) = \sum_i (-1)^{i+1} \iota_{X_0} \cdots \Hat{\iota_{X_i}} \cdots \iota_{X_{p-1}} L_{X_i} \alpha \\
+ \sum_{i<j} (-1)^{i+j} \iota_{[X_i,X_j]}\iota_{X_0} \cdots \Hat{\iota_{X_i}} \cdots \Hat{\iota_{X_j}} \cdots \iota_{X_{p}} \alpha .
\end{multline}
The first line cancels with \eqref{eqn:CEp} and the second line, when evaluated at zero, matches \eqref{eqn:wdifferential}.
We conclude that $\Phi$ is a cochain map.

By the formal Poincar\'{e} lemma, we know that the inclusion
\beqn
i\colon \left(C^\bu, \d_{\lie{w}}\right) \hookrightarrow \left(C^\bu(\Omega^\bu), D\right)
\eeqn
induced by the inclusion $\C \hookrightarrow \cO$ of constant functions is a quasi-isomorphism.
It is clear that $\Phi \circ i = \id_{C^\bu}$. 
To conclude the proof, we will construct a homotopy $i \circ \Phi \simeq \id_{C^\bu(\Omega^\bu)}$.
Consider the commutative dg algebra
\beqn
\Omega^\bu([0,1]) \Hat{\otimes} C^\bu(\Omega^\bu) ,
\eeqn
which is equipped with the differential $\d_t + D = \d_t + \d_{CE}^{\Omega^\bu} + \d_{dR}$ where $\d_t$ denotes the de Rham differential on the interval $[0,1]$.
Fiberwise integration is a linear map
\beqn
\int_{[0,1]} \colon \Omega^\bu([0,1]) \Hat{\otimes} C^\bu(\Omega^\bu) \to C^\bu(\Omega^\bu)[-1] .
\eeqn
Explicitly, $\int_{[0,1]} f(t,x) \alpha (x) = 0$ and $\int_{[0,1]} f(t,x) \d t \alpha = \left(\int_0^1 f(t,x) \d t\right) \alpha(x)$, for $f \in C^\infty([0,1])$, $\alpha \in C^\bu(\Omega^\bu)$.
Now, consider $\alpha \in C^q (\Omega^p)$, which we will write as
\beqn
\alpha(X_1,\ldots,X_q) = \sum_I \alpha(X_q,\ldots,X_q)(x)_I \d x^I
\eeqn
where $I$ is a multi-index.
Define 
\beqn
h^* \alpha \in \Omega^1([0,1]) \Hat{\otimes} C^q(\Omega^{p-1})
\eeqn
as
\beqn
(h^* \alpha)(X_1,\ldots,X_n)(t,x) = \sum_I \alpha(X_q,\ldots,X_q)(tx)_I \d(tx^I) .
\eeqn
Define $\Psi (\alpha) = \int_{[0,1]} \circ h^*(\alpha)$.

Observe that $\int_{[0,1]}$ anticommutes with the Chevalley--Eilenberg differential.
Thus by the same argument as in the proof of the Poincar\'{e} lemma we see that 
\beqn
i \circ \Phi - \id = D \circ \Psi + \Psi \circ D .
\eeqn
This shows that $\Phi$ is a homotopy equivalence, as desired.

\end{proof}

\subsection{Examples}

Using the above results, we will work out representatives of Gelfand--Fuks cohomology classes in some low-dimensional examples.

\subsubsection{}
Let's consider the one-dimensional case.
It is well-known that $H^\bu(\lie{w}_1;\C)$ is concentrated in degree zero and three, and $H^3(\lie{w}_1;\C)$ is one-dimensional.
The spectral sequence 
\beqn
C^\bu(\lie{w}_1;\Omega^\bu) \implies \C^\bu(\lie{w}_1;\C)
\eeqn
collapses at the $E_2$-page and the only remaining generator is $a_1 \tau_1$ which is of bidegree $(1,2)$ which we view as an element
\beqn
a_1 \tau_1 \in C^2 (\lie{w}_1 ; \Omega^1) .
\eeqn
The image of this element under $\Phi$ will thus give a generator for the cohomology $H^3 (\lie{w}_1;\C)$.
Note that $\Phi(a_1 \tau_1) = -a_1 (\iota \tau_1)|_0$ and that
\beqn
(\iota \tau_1)(f\del_x, g\del_x) = f \del_x^2 g - g \del_x^2 f \in \cO ,
\eeqn
where $f,g \in \cO$.
Thus
\beqn\label{eqn:GF1d}
\Phi(a_1 \tau_1)(f\del_x, g\del_x, h \del_x) = \op{det} \begin{pmatrix} f & g & h \\ \del_x f & \del_x g & \del_x h \\ \del_x^2 f & \del_x^2 g & \del_x^2 h \end{pmatrix} (0) ,
\eeqn
recovering the well-known formula for this cocycle, for example see \cite{Fuks}.

\subsubsection{}
Let's consider the two-dimensional case.
As before, we use the results above together with the spectral sequence
\beqn
E_1 = H^\bu(\lie{w}_2 ; \Omega^\bu) \implies H^\bu(\lie{w}_2)
\eeqn
to construct representatives for the cohomology classes in $H^5(\lie{w}_2)$, which we know is a two-dimensional vector space.
The $E_1$ page is the free commutative bigraded algebra on generators $(a_1,a_3,\tau_1,\tau_2)$ of bidegrees $((0,1), (0,3),(1,1), (2,2))$ subject to the relations
\beqn
\tau_1^3 = \tau_1\tau_2 = \tau_2^2 = 0 .
\eeqn
The differential on the $E_1$ page is $\d \tau_1 = a_1$.
On the $E_2$ page the differential is determined by $\d \tau_2 = a_2$.
From this, we see that the spectral sequence collapses at the $E_3$ page.
The cohomology we are interested lives purely in bidegree $(2,3)$ on this page and is represented by the elements $a_1 \tau_1^2, a_2 \tau_2^2$.

From this description it is immediate to see that the elements $a_1 \tau_1^2 , a_1 \tau_2 \in H^3(\lie{w}_2 ; \Omega^2)$ survive to the $E_\infty$-page.
Now $\Phi(a_1\tau_2) = -\frac12 a_1 \iota^2 \tau_2$ and
\begin{multline}
\frac12 \iota^2 \tau_2 (X_1,X_2,X_3,X_4) = \iota_{X_1} \iota_{X_2} \op{Tr}(\d J X_3 \d J X_4) - \iota_{X_1} \iota_{X_3} \op{Tr}(\d J X_2 \d J X_4) + \iota_{X_1} \iota_{X_4} \op{Tr}(\d JX_2 \d JX_3) \\
+\iota_{X_2} \iota_{X_3} \op{Tr}(\d J X_1 \d J X_4) - \iota_{X_2} \iota_{X_4} \op{Tr}(\d J X_1 \d J X_3) + \iota_{X_3} \iota_{X_4} \op{Tr}(\d JX_1 \d JX_2) .
\end{multline}
Thus
\beqn
\Phi(a_1 \tau_2)(X_0,\ldots,X_4) = \op{Tr}(JX_0) (0) \iota_{X_1} \iota_{X_2} \op{Tr}(\d J X_3 \d J X_4)(0) + \cdots
\eeqn
where the $\cdots$ denote terms which make the cocycle manifestly antisymmetric.
Similarly
\beqn
\Phi(a_1 \tau_1^2)(X_0,\ldots,X_4) = \op{Tr}(JX_0)(0) \iota_{X_1} \iota_{X_2} \op{Tr}(\d J X_3)(0) \op{Tr}(\d J X_4)(0) + \cdots
\eeqn

\section{Descent equations and local representatives}\label{sec:descent}

The goal in this section is to describe an explicit representatives for local cohomology classes following theorem \ref{thm:hol} in the affine case $X = \C^n$.
We will focus on holomorphic vector fields, but in the last part of this section we comment on results for smooth vector fields.

\subsection{Descent, two ways}
Recall that the global sections on $\C^n$ of the local dg Lie algebra $\cT$ is the dg Lie algebra $\cT(\C^n) = \Omega^{0,\bu}(\C^n, \T^{1,0})$.
Dolbeault's theorem implies that there is a quasi-isomorphism of dg Lie algebras
\beqn
\cT(\C^n) \xto{\simeq}  H^0(\cT(\C^n)) = \Vect^{hol}(\C^n) ,
\eeqn
where $\Vect^{hol}(\C^n)$ is the Lie algebra of holomorphic vector fields on $\C^n$.
Taylor expansion at $0 \in \C^n$ defines a map of Lie algebras from holomorphic vector fields to formal vector fields:
\beqn
j_0^\infty \colon \Vect^{hol} (\C^n)  \to \lie{vect}(n)
\eeqn
which takes the Taylor expansion of a holomorphic vector field at $0 \in \C^n$. 
We denote the composition $j \define j_0^\infty \circ p : \cT(\C^n) \to \lie{vect}(n)$. 
The map $j$ defines a map on the continuous Chevalley--Eilenberg cochain complexes
\beqn
j^* \colon C^\bu(\lie{vect}(n)) \to C^\bu\left(\cT(\C^n))\right) .
\eeqn

Associated to the local Lie algebra $\cT$ we have the $\infty$-jet bundle $J (\cT)$. 
We denote the dg Lie algebra of global sections of this jet bundle by $\op{jet} (\cT) (\C^n)$.
By construction, we note that the map $j^*$ factors through the embedding of cochain complexes
\beqn
C^\bu\bigg( \op{jet} (\cT) (\C^n) \bigg) \hookrightarrow C^\bu \left( \cT(\C^n) \right) .
\eeqn
So, we obtain for each $\phi \in C^\bu(\lie{vect}(n))$ a section $j^* \phi$ of the vector bundle $C^\bu\left(\op{jet}(\cT)\right)$. 

\begin{eg}
Suppose $n=1$ and consider the $1$-cochain $\phi : f(x) \frac{\partial}{\partial x} \mapsto f'(0)$ of $\fw_1$. 
The value of the section $j^* \phi$ at the point $z_0 \in \C$ is the cochain for $\cT = \Omega^{0,\bu}(\C, T_\C)$ defined by
\beqn
a(z,\zbar) \frac{\partial}{\partial z} + b(z,\zbar) \d \zbar \frac{\partial}{\partial z} \mapsto \frac{\partial}{\partial z} a(z,\zbar) |_{z = z_0} .
\eeqn
\end{eg}

On any manifold, we have seen that $C^\bu\left(\op{jet}(\cT)\right)$ is commutative dg algebra in the category of $D$-modules.
On $\C^n$ we consider the associated de Rham complex
\beqn\label{eqn:deRhamdescent}
\Omega^\bu \bigg( \C^n \; , \; C^\bu \left( \op{jet} (\cT) \right) \bigg)
\eeqn
Recall that up to a shift (and upon taking reduced cochains) this complex is quasi-isomorphic to the complex which computes the local cohomology of $\cT$. 

Via the map $j$, a cochain $\phi \in C^\bu(\lie{vect}(n))$ determines a zero form in this de Rham complex 
\beqn
\phi^0 \define j^* \phi \in \Omega^0\left(\C^n , C^\bu(\op{jet}(\cT)) \right) .
\eeqn
The section $\phi^0$ is not flat, but we have the following. 

\begin{prop}
\label{prop:gfdescent}
Suppose $\phi \in C^\bu(\lie{vect}(n))$ and let $\phi^0 = j^* \phi$. 
Then, there exists $\phi^{i,j} \in \Omega^{i,j} (\C^n , C^\bu \left( \op{jet} (\cT) \right))$, $1 \leq i,j \leq n$ such that the element 
\beqn
\Phi \define \sum_{i,j} \phi^{i,j} 
\eeqn
satisfies the equation $(\del + \dbar + \d_{\cT}) \Phi = 0$. 
\end{prop}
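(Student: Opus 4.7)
The plan is to construct $\Phi$ explicitly via the exponential formula already announced in the introduction, namely
\[
\Phi = \exp\Bigl(\sum_{i=1}^n \bigl(\d z_i \, \eta_i + \d \zbar_i \, \Bar{\eta}_i\bigr)\Bigr) \phi^0,
\]
with $\phi^{i,j}$ picked out as the $(i,j)$-form component of the right-hand side. Then I would verify the descent equation $(\del + \dbar + \d_{\cT})\Phi = 0$ by a Cartan-type computation; this requires $\phi$ to be a Chevalley--Eilenberg cocycle, which is the case of interest, since only cocycles contribute classes under the map $\delta$ of the introduction.

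First I would give precise definitions of the homotopy operators. Let $\eta_i$ be Chevalley--Eilenberg contraction with the constant holomorphic vector field $\del_{z_i} \in \cT(\C^n)$, defined exactly as the operator $\iota_x$ appearing in the proof of theorem \ref{thm:smooth}. Let $\Bar{\eta}_i$ be the analogous contraction with the Dolbeault representative $\d \zbar_i \otimes \del_{z_i}$ of the antiholomorphic translation, regarded as a degree-one element of the dg Lie algebra $\cT(\C^n) = \Omega^{0,\bu}(\C^n, \T^{1,0})$. Because both vector fields are translation invariant, these operators descend to operators of cohomological degree $-1$ on the $D$-module $C^\bu(\op{jet}(\cT))$.

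Next I would establish the Cartan identities
\[
[\d_{\cT}, \eta_i] = L_{\del_{z_i}}, \qquad [\d_{\cT}, \Bar{\eta}_i] = L_{\del_{\zbar_i}},
\]
which are the holomorphic analogues of the identity $[\d_{CE}, \iota_x] = L_x$ recorded in the proof of theorem \ref{thm:smooth}. The accompanying ingredient is the identification of the Lie-derivative actions $L_{\del_{z_i}}$ and $L_{\del_{\zbar_i}}$ on jets of $\cT$ with the horizontal pieces of the flat connection on $\op{jet}(\cT)$; in the standard conventions these agree with $-\del/\del z_i$ and $-\del/\del \zbar_i$ respectively.

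Finally, with $D = \del + \dbar + \d_{\cT}$ and $H = \sum_i(\d z_i \, \eta_i + \d \zbar_i \, \Bar{\eta}_i)$, I would compute $D(e^H \phi^0)$. Since $\d z_i$ and $\d \zbar_i$ are closed and $\eta_i, \Bar{\eta}_i$ are constant in the base coordinates, one has $[\del + \dbar, H] = 0$; combined with the Cartan identities, $[D, H] = -(\del + \dbar)$, and this commutes with $H$. A standard rearrangement of the exponential series then yields
\[
D(e^H \phi^0) = e^H \, \d_{\cT}\phi^0 = e^H \, j^*(\d_{CE}\phi),
\]
which vanishes whenever $\phi$ is a cocycle. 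The main obstacle will be the second step: matching the interior contraction $\Bar{\eta}_i$ by the Dolbeault lift of $\del_{\zbar_i}$ with the genuine antiholomorphic partial derivative on jets, and verifying that the two Cartan identities hold with the correct signs under the chosen conventions. Once this identification is in place, the remaining descent computation is a routine Leibniz calculation.
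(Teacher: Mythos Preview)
Your overall strategy---defining $\Phi$ by the exponential formula and verifying closure via Cartan-type commutator identities---is exactly the paper's approach, and your observation that one implicitly needs $\phi$ to be a cocycle is correct. The gap is in your definition of $\Bar\eta_i$.

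The element $\d\zbar_i \otimes \del_{z_i} \in \Omega^{0,1}(\C^n,\T^{1,0})$ is not a ``Dolbeault representative of the antiholomorphic translation'': the vector field $\del_{\zbar_i}$ is a section of $\T^{0,1}$, not of $\T^{1,0}$, so it has no representative in $\cT$ at all. (Indeed $\d\zbar_i\otimes\del_{z_i}=\dbar(\zbar_i\,\del_{z_i})$ is $\dbar$-exact, hence represents zero.) With your definition $\Bar\eta_i=\iota_{\d\zbar_i\otimes\del_{z_i}}$ as a Chevalley--Eilenberg contraction, the identity you need fails: one obtains $[\d_{\cT},\Bar\eta_i]=L_{\d\zbar_i\otimes\del_{z_i}}$, the adjoint action of that degree-one element of $\cT$, which involves \emph{holomorphic} derivatives $\del_{z_i}$ wedged with $\d\zbar_i$ and is not the operator $\del/\del\zbar_i$ on jets. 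So the ``main obstacle'' you flag is genuine, but it cannot be resolved along the lines you suggest.

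The paper's $\Bar\eta_i$ is of a different nature. It is a degree $-1$ derivation of the dg Lie algebra $\cT$ itself, given by contracting the Dolbeault-form factor against $\del_{\zbar_i}$:
\[
\Bar\eta_i\bigl(\alpha\,\tfrac{\del}{\del z_j}\bigr)=\bigl(\iota_{\del/\del\zbar_i}\alpha\bigr)\,\tfrac{\del}{\del z_j},
\]
and then extended to $C^\bu(\cT)$ as a derivation (not as a CE contraction). With this definition the required identity $[\dbar_{\cT},\Bar\eta_i]=\del/\del\zbar_i$ is just the classical Cartan formula on Dolbeault forms, and the argument splits cleanly into the two descent equations $\dbar\phi^{i,j}+\dbar_{\cT}\phi^{i,j+1}=0$ (via $\Bar\eta$) and $\del\phi^{i,j}+\d_{CE}\phi^{i+1,j}=0$ (via $\eta$). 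Once you replace your $\Bar\eta_i$ by this operator, the rest of your outline goes through essentially as written.
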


Using the Hodge decomposition of the Rham differential $\d_{\rm dR} = \dbar + \partial$, we we will show that the elements $\phi^{i,j}$ satisfy a pair of descent equations:
\begin{itemize}
\item Holomorphic descent equations:
\beqn\label{eqn:holdescent}
\dbar \phi^{i,j} + \dbar_{\cT} \phi^{i, j+1} = 0
\eeqn
for $0 \leq i , j \leq n$.
Here $\dbar_{\cT}$ denotes the differential internal to the dg Lie algebra~$\cT$. 
\item Cartan descent equations:
\beqn\label{eqn:cartandescent}
\partial \phi^{i,j} + \d_{CE} \phi^{i+1, j} = 0
\eeqn
for $0 \leq i , j \leq n$. 
Here $\d_{CE}$ denotes the Chevalley--Eilenberg differential associated to the Lie bracket of holomorphic vector fields.
\end{itemize}

In fact, the elements $\phi^{i,j}$ admit the following explicit expressions.
Define the degree $(-1)$ derivation $\Bar{\eta}_i$ of the dg Lie algebra $\cT(\C^n)$ by 
\beqn\label{eqn:holdescent}
\Bar{\eta}_i \left(\alpha(z,\zbar) \frac{\partial}{\partial z_j} \right) = \left(\iota_{\frac{\partial}{\partial \zbar_i}} \alpha \right) (z,\zbar) \frac{\partial}{\partial z_j} .
\eeqn
On the right-hand side, $i_X \alpha$ denotes the contraction of the differential form $\alpha$ by the vector field $X$. 
This derivation extends to a derivation of the algebra \eqref{eqn:deRhamdescent} that we denote by the same symbol. 

Next, define the derivation $\eta_i$ of the algebra $C^\bu(\cT(\C^n))$ by the formula
\beqn\label{eqn:cartandescent}
\eta_i (\psi) = \iota_{\frac{\partial}{\partial z_i}} \psi .
\eeqn
The right-hand side is the contraction of the cochain $\psi \in C^\bu(\cT(\C^n))$ by the vector field~$\frac{\partial}{\partial z_i}$. 
Explicitly, if $\psi$ is $k$-linear, then 
\beqn
(\iota_{\frac{\partial}{\partial z_i}} \psi)(\xi_1,\ldots, \xi_{k-1}) = \sum_{j=1}^{k-1} (-1)^{j+1}\psi\left(\xi_1,\ldots, \xi_j, \frac{\partial}{\partial z_i}, \xi_{j+1}, \ldots, \xi_{k-1} \right) .
\eeqn
This derivation also extends to a derivation of the algebra \eqref{eqn:deRhamdescent} that we denote by the same symbol. 

Given this notation, we return to the starting data which is a cochain $\phi \in C^\bu(\lie{vect}(n))$.
Recall, we set $\phi^0 = j^* \phi$ which is a zero form in the de Rham complex of jets. 
A representative for $\Phi$ as in the theorem is
\beqn\label{eqn:Phi}
\Phi = \exp\left(\sum_{i=1}^n \left(\d \zbar_i \Bar{\eta}_i + \d z_i \eta_i\right)\right) \phi^0 . 
\eeqn
Note that the derivations $\eta_i$ and $\Bar{\eta}_j$ commute for all $i,j$, so the right-hand side of the equation is unambiguously defined.

\begin{proof}[Proof of proposition \ref{prop:gfdescent}]
The differential on the de Rham complex $\Omega^\bu(\C^n , C^\bu(\op{jet}(\cT)))$ has the form $\d_{dR} + \d_{\cT}$ where $\d_{dR}$ is the de Rham differential encoded by the flat connection on $C^\bu(\op{jet}(\cT))$ and $\d_{\cT}$ is the differential internal to the complex $C^\bu(\cT)$. 
Note that $\d_{\cT}$ splits as $\d_{\cT} = \dbar_{\cT} + \d_{CE}$ where $\dbar_{\cT}$ is the $\dbar$-operator arising in the definition of $\cT$ (we use this notation to not confuse it with the de Rham differential), and $\d_{CE}$ is the differential arising from the Lie bracket on $\cT$. 

It suffices to show that the elements $\phi^{i,j}$ satisfy the pair of descent equations (\ref{eqn:holdescent}) and (\ref{eqn:cartandescent}). 
Since the operators $\eta_i$ and $\Bar{\eta}_i$ commute, it suffices to prove (\ref{eqn:holdescent}) for $i=0$ and (\ref{eqn:cartandescent}) for $j=0$. 

Note that $\phi^{0,j}$ is the $(0,j)$th component in the expansion of $\exp\left(\sum_\ell \d \zbar_\ell \Bar{\eta}_\ell \right) \phi^0$. 
For $i=0$, descent equation (\ref{eqn:holdescent}) follows from
\beqn
\dbar_{\cT} \d \zbar_\ell \Bar{\eta}_\ell \phi^{0, j} = - \d \zbar_\ell [\dbar_{\cT}, \Bar{\eta}_\ell] \phi^{0,j} = -\d \zbar_\ell \frac{\partial}{\partial \zbar_\ell} \phi^{0,j} .
\eeqn

For the second descent equation, note that $\phi^{i,0}$ is the $(i,0)$th component in the expansion $\exp\left(\sum_\ell \d z_\ell  \right) \phi^0$.
For $j=0$, descent equation (\ref{eqn:cartandescent}) follows from
\beqn
\d_{CE} \d z_\ell \eta_\ell \phi^{i,0} = - \d z_{\ell} [\d_{CE}, \eta_\ell] \phi^{i,0} = - \d z_\ell \frac{\partial}{\partial z_\ell} \phi^{i,0} .
\eeqn
\end{proof}

Combining this result with Theorem \ref{thm:hol} we obtain the following.

\begin{cor}
The composite map 
\beqn
\delta \colon C_{red}^\bu(\lie{vect}(n)) \to \Omega^\bu\left(\C^n , C_{red}^\bu(\op{jet} \cT) \right) \xto{\simeq} \cloc^\bu(\cT(\C^n)) [-2n].
\eeqn
which sends $\phi \mapsto \delta(\phi) = \int \phi^{n,n}$ is a quasi-isomorphism.
In particular, if $\phi \in C^\bu(\lie{vect}(n))$ is a Gelfand--Fuks cocycle of degree $k$, then $\delta (\phi) \in \cloc^\bu(\cT)$ is a local cocycle of degree $k-2n$ and up to equivalence all such local cocycles are obtained in this way.
\end{cor}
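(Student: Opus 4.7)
My plan is to prove the corollary in two steps: (i) verify that $\delta$ is a cochain map and (ii) show it induces an isomorphism on cohomology.

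For (i), I would read off the consequence of proposition \ref{prop:gfdescent} in top bidegree. For a $\d_{\fw}$-cocycle $\phi$, the element $\Phi = \exp(\sum_i(\d \zbar_i \Bar{\eta}_i + \d z_i \eta_i)) j^*\phi = \sum_{i,j} \phi^{i,j}$ satisfies $(\partial + \dbar + \d_{\cT}) \Phi = 0$. Extracting the bidegree $(n,n)$ component yields
\beqn
\d_{\cT} \phi^{n,n} = -\partial \phi^{n-1,n} - \dbar \phi^{n,n-1} ,
\eeqn
whose right-hand side is a total de Rham derivative. Under the quasi-isomorphism $\cloc^\bu(\cT(\C^n)) \simeq \Omega^\bu(\C^n, C^\bu_{red}(\op{jet}\cT))[2n]$ of \eqref{derham1}, total derivatives are quotiented out, so $\delta(\phi) = \int \phi^{n,n}$ is a local cocycle of degree $k - 2n$ when $\phi$ has degree $k$. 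For general (not necessarily closed) $\phi$, the same extraction together with the identity $\d_{\cT}(j^*\phi) = j^*(\d_{\fw} \phi)$ (using that $j$ is a map of Lie algebras) delivers $\d_{CE}^{loc}\delta(\phi) = \delta(\d_{\fw}\phi)$, making $\delta$ a chain map.

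For (ii), I would first establish the abstract match of cohomologies by invoking theorem \ref{thm:hol}: since $\C^n$ is contractible and parallelizable, the holomorphic frame bundle is trivializable and $\op{Fr}_{\C^n} \times^{U(n)} X(n)$ is homotopy equivalent to $X(n)$, so
\beqn
H^k_{loc}(\cT(\C^n)) \cong H^{k+2n}_{dR}(X(n)) \cong H^{k+2n}_{red}(\lie{vect}(n)) ,
\eeqn
where the last step is the Gelfand--Fuks theorem. To identify $\delta$ with this isomorphism, I would match its explicit construction to the proof of theorem \ref{thm:hol} step by step. There, Gelfand--Kazhdan descent identifies $C^\bu_{red}(\op{jet}\cT)$ with $\op{desc}_{\C^n}(C^\bu_{red}(\lie{vect}(n)))$, and lemma \ref{lem:silly} exploits the homotopy triviality of the $\lie{vect}(n)$-action on $C^\bu_{red}(\lie{vect}(n))$ (via the Cartan contractions $\iota_x$ with $[\d_{CE}, \iota_x] = L_x$) to reduce the de Rham complex of the jets to basic forms. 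In the affine setting the operators $\eta_i$ and $\Bar{\eta}_i$ are precisely the Cartan contractions for the translation generators $\partial_{z_i}, \partial_{\zbar_i} \in \lie{vect}(n)$, as witnessed by $[\d_{CE}, \eta_i] = \partial_{z_i}$ and $[\dbar_{\cT}, \Bar{\eta}_i] = \partial_{\zbar_i}$. Hence the exponential $\exp(\sum(\d \zbar_i \Bar{\eta}_i + \d z_i \eta_i))$ is the explicit gauge transformation trivializing the flat $D$-module connection, and extracting $\phi^{n,n}$ is the projection onto the basic top-degree form implicit in lemma \ref{lem:silly}.

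The main obstacle lies in step (ii): confirming rigorously that the explicit exponential realizes the abstract gauge transformation from the proof of theorem \ref{thm:hol}, and verifying via the resulting diagram that the induced map on cohomology is the Gelfand--Fuks isomorphism rather than some nontrivial twist of it. Once that matching is in hand, the chain map property established in step (i) promotes $\delta$ to a quasi-isomorphism, and the "in particular" clause about Gelfand--Fuks cocycles producing all local cocycles up to equivalence is immediate.
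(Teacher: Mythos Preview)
Your approach is correct and is exactly what the paper has in mind: the paper proves the corollary by the one-line remark ``Combining this result with Theorem~\ref{thm:hol} we obtain the following,'' and your two steps are precisely the unpacking of that sentence. Your step (i) is the content of proposition~\ref{prop:gfdescent} at top bidegree, and your step (ii) is the specialization of the proof of theorem~\ref{thm:hol} to $X = \C^n$, where the Gelfand--Kazhdan connection is the trivial one and the Cartan homotopies $\iota_x$ used there become exactly your $\eta_i$, $\Bar{\eta}_i$.

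One small correction: you write ``the translation generators $\partial_{z_i}, \partial_{\zbar_i} \in \lie{vect}(n)$,'' but $\partial_{\zbar_i}$ is not an element of $\lie{vect}(n)$. The $\eta_i$ are indeed the Cartan contractions $\iota_{\partial_{z_i}}$ on $C^\bu(\lie{vect}(n))$ transported through $j^*$, which is what matches lemma~\ref{lem:silly}; the $\Bar{\eta}_i$ by contrast are homotopies trivializing the $\partial_{\zbar_i}$-action on the Dolbeault resolution $\cT$, not on $\lie{vect}(n)$. Keeping these two mechanisms separate is precisely the ``holomorphic versus Cartan'' dichotomy the paper sets up in the two descent equations, and once you do so the matching you sketch goes through.
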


\subsection{Examples}
We spell out some low-dimensional examples.

\subsubsection{}
The reduced cohomology of one-dimensional formal vector fields is one-dimensional concentrated in degree $+3$.
Thus, by theorem \ref{thm:hol} we see
\beqn
H^1_{loc}(\cT(\C)) = H^3 (\lie{w}_1)
\eeqn
is one-dimensional.

A representative for this class can be deduced from the explicit Gelfand--Fuks cocycle in equation \eqref{eqn:GF1d}, that we will denote by $\phi$.
The section $\phi^0 = j^* \phi$ of $C^\bu(\op{jet}(\cT))$ is
\beqn
\phi^0 \bigg(\alpha(z,\zbar) \frac{\partial}{\partial z} , \beta(z,\zbar) \frac{\partial}{\partial z} , \gamma(z ,\zbar) \frac{\partial}{\partial z} \bigg) = {\rm det} \begin{pmatrix} \alpha^0 & \beta^0 & \gamma^0 \\ \partial_z\alpha^0 & \partial_z\beta^0 & \partial_z \gamma^0  \\ \partial_z^2 \alpha^0 & \partial_z^2\beta^0 & \partial_z^2 \gamma^0\end{pmatrix} (z,\zbar) .
\eeqn
Here $\alpha^0$ denotes the zero form component of the differential form $\alpha$. 
We first solve for the descent element $\phi^{0,1}$ which satisfies the holomorphic descent equation
\beqn
\dbar \phi^0 = \dbar_\cT \phi^{0,1} .
\eeqn
This element has the form $\phi^{0,1} = \d \zbar \psi^{0,1}$ where $\psi^{0,1}$ is the section of $C^\bu(\op{jet} (\cT))$ defined by
\beqn
\psi^{0,1} \bigg(\alpha \frac{\partial}{\partial z} , \beta \frac{\partial}{\partial z} , \gamma \frac{\partial}{\partial z} \bigg) = \phi^0\left(\alpha^{0,1} \frac{\partial}{\partial z} , \beta^0 \frac{\partial}{\partial z} , \gamma^0 \frac{\partial}{\partial z} \right) + \cdots
\eeqn
where $\cdots$ denotes the two terms obtained by swapping the role of $\alpha$ with $\beta,\gamma$ respectively. 
Next, we solve for $\phi^{1,1}$ which satisfies the Cartan descent equation
\beqn
\partial \phi^{0,1} = \d_{\rm CE} \phi^{1,1} .
\eeqn
Explicitly $\phi^{1,1} = \d z \d \zbar \psi^{1,1}$ with $\psi^{1,1}$ the section of $C^\bu(\op{jet} (\cT))$ defined by
\beqn
\psi^{1,1} \bigg(\alpha \frac{\partial}{\partial z} , \beta \frac{\partial}{\partial z} \bigg) = \partial_z \alpha^{0,1} \partial_z^2 \beta^0  - \partial_z^2 \alpha^{0,1} \partial_z \beta^0 + \left(\alpha \leftrightarrow \beta \right) .
\eeqn
The local cocycle $\delta(\phi) = \int \phi^{1,1} \in \cloc^\bu(\cT(\C))$ can be put in the following more uniform form
\beqn
\delta(\phi)(\mu) = \int J(\mu) \wedge \partial J(\mu) ,
\eeqn
where $J \mu = J(\alpha(z,\zbar) \del_z) = \del_z \alpha(z,\zbar)$ is the Jacobian of holomorphic vector fields extended to the Dolbeault resolution in the natural way.

More generally, we will use the following construction. 
If
\beqn
\alpha_i (z,\zbar) \frac{\del}{\del z_i} \in \Omega^{0,\bu}(\C^n , \T^{1,0})
\eeqn
is an element of the Dolbeault complex of the holomorphic tangent bundle, then we define
\beqn
J \mu \define \left( \frac{\del \alpha_i}{\del z_j} (z,\zbar) \right) \in \lie{gl}(n) \otimes \Omega^{0,\bu}(\C^n) .
\eeqn
%

\subsubsection{}

We have seen that $H^{1}_{loc}(\cT(\C^2))$ is two-dimensional corresponding to the two linearly independent classes in $H^5(\lie{vect}(2))$.
In turn, from the formal de Rham spectral sequence, these two classes arise from the classes $a_1 \tau_2, a_1 \tau_1 \tau_2 \in C^3(\lie{vect}(2), \Omega^2)$.
Using the explicit expressions for these classes given in section \ref{sec:gf} we arrive at the following.

\begin{prop}\label{prop:2dloc}
Explicit representatives for the two linearly independent classes in $H^{1}_{loc}(\cT(\C^2))$ are
\beqn
\int_{\C^2} \op{Tr} ( J \mu ) \op{Tr}(\del J \mu \del J \mu) ,
\eeqn
and
\beqn
\int_{\C^2} \op{Tr}(J \mu) \op{Tr}(\del J \mu) \op{Tr}(\del J \mu) .
\eeqn
\end{prop}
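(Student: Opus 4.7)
The plan is to deduce the two representatives by tracing the two spanning classes of $H^5(\lie{vect}(2);\C)$ through the explicit descent quasi-isomorphism $\delta$ of the preceding corollary. By the discussion in section \ref{sec:gf} on the formal de Rham spectral sequence in dimension $2$, the two surviving classes on $E_\infty$ lie in bidegree $(2,3)$ and are represented by $a_1\tau_2$ and $a_1\tau_1^2$ in $C^3(\lie{vect}(2); \Omega^2)$. Applying $\Phi$ from Proposition \ref{prop:quasi}, we obtain two cocycles $\Phi(a_1\tau_2), \Phi(a_1\tau_1^2)\in C^5(\lie{vect}(2);\C)$ whose images under $\delta$ land in $\cloc^1(\cT(\C^2))$.

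First, I would record the explicit formulas for these pulled-back $\Phi$-cocycles, which were already computed in the second subsubsection of section \ref{sec:gf}: they are antisymmetrized products of the scalar factor $\op{Tr}(JX_0)(0)$ coming from $a_1$, and either $\iota_{X_1}\iota_{X_2}\op{Tr}(\d JX_3 \wedge \d JX_4)$ (for $\tau_2$) or $\iota_{X_1}\iota_{X_2}\op{Tr}(\d JX_3)\op{Tr}(\d JX_4)$ (for $\tau_1^2$). These give $5$-cochains on $\lie{vect}(2)$, which pull back to $j^*\phi \in C^5(\op{jet}(\cT))$ via the Taylor expansion map $j$ on $\C^2$.

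Next, I would evaluate the descent formula $\delta(\phi)=\int \phi^{2,2}$, where $\phi^{2,2}$ is the $(\d^2 z\, \d^2\zbar)$-component of $\exp\!\left(\sum_i (\d\zbar_i \bar\eta_i + \d z_i \eta_i)\right) j^*\phi$. The essential observation is the following dictionary: the formal Jacobian $JX$ evaluated on a jet of a Dolbeault vector field $\mu=\alpha_i \del_{z_i}$ becomes the Dolbeault-extended Jacobian $J\mu = (\del_{z_j}\alpha_i)$; the action of $\bar\eta_i$ extracts the Dolbeault $(0,1)$-components from the arguments of the cochain, thereby promoting $J\mu$ from its $(0,0)$ component to the full form-valued object; and the $\eta_i$ combined with the remaining $\iota$ operators in $\Phi$ convert the formal de Rham differential $\d$ of $\tau_i$ into the holomorphic de Rham differential $\partial$ on $\C^2$. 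Collecting only the top form-degree $(2,2)$-component eliminates all mixed terms, so only the summands with two $\bar\eta$'s and two $\eta$'s survive; after the dust settles, $\Phi(a_1\tau_2)$ produces $\op{Tr}(J\mu)\op{Tr}(\partial J\mu\wedge \partial J\mu)$ and $\Phi(a_1\tau_1^2)$ produces $\op{Tr}(J\mu)\op{Tr}(\partial J\mu)\op{Tr}(\partial J\mu)$, which are the stated integrands.

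Finally, linear independence of the two resulting local cocycles in $H^1_{loc}(\cT(\C^2))$ is inherited from that of $a_1\tau_2, a_1\tau_1^2$ in $H^5(\lie{vect}(2))$ via the quasi-isomorphism $\delta$. The main obstacle will be the combinatorial bookkeeping of signs and multiplicities: one must carefully check that the $\iota$-operators used to define $\Phi$ conspire with the $\eta_i,\bar\eta_i$ operators so that the only surviving contractions are those that fully reconstruct $J\mu$ as a Dolbeault $(0,\bullet)$-valued matrix, rather than assembling extra terms that would alter the final expression. This combinatorial matching, together with the commutativity of $\eta_i$ and $\bar\eta_j$ used in the proof of Proposition \ref{prop:gfdescent}, is what makes the closed-form expressions in the proposition come out clean.
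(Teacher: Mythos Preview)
Your proposal is correct and follows exactly the approach the paper intends: trace the classes $a_1\tau_2,\ a_1\tau_1^2\in C^3(\lie{vect}(2);\Omega^2)$ through $\Phi$ and then through the descent map $\delta$, using the dictionary that $JX\mapsto J\mu$ and $\d\mapsto\partial$. The paper itself gives no detailed proof, merely stating that the proposition follows from the explicit expressions in section~\ref{sec:gf}; your writeup supplies the mechanics (the roles of $\eta_i,\bar\eta_i$ and the bookkeeping of form degrees) that the paper leaves implicit.
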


\subsubsection{}

In complex dimension one and two there are obvious parallels between the representatives of the local cocycle and the representative for the cocycle of formal vector fields.
Indeed, in complex dimension one we know that the generating class in $H^3(\lie{vect}(1))$ arises, via formal descent, from the class $a_1 c_1 \in H^2(\lie{vect}(1) ; \Omega^1)$.
Similarly, the local classes in proposition \ref{prop:2dloc} bare the same form as the representatives $a_1 \tau_2, a_1 \tau_1^2 \in H^3(\lie{vect}(n) ; \Omega^2)$ which, in turn, generate the cohomology $H^5(\lie{vect}(2))$ by applying formal descent.
The key feature that these examples have in common is that the generators of cohomology of $\lie{vect}(n)$ arise by applying formal descent to classes valued in the top formal de Rham forms $\Omega^n(\Hat{D}^n)$. 

In complex dimension three, however, there are classes in $H^7(\lie{vect}(3))$ which do not arise from applying formal descent to a Gelfand--Fuks class with values in $\Omega^3(\Hat{D}^3)$.
Indeed, $H^7(\lie{vect}(3))$ is four-dimensional. 
The classes which generate the cohomology, via formal descent, are $a_1 \tau_1^3, a_1 \tau_1 \tau_2 ,a_1 \tau_3$ and $a_2 \tau_2$.
The first three elements live in $H^4(\lie{vect}(3) ; \Omega^3)$ while the latter element is the generator of $H^5(\lie{vect}(3) ; \Omega^2)$ which takes the explicit form
\beqn
a_2 \tau_2 \colon (X_0,\ldots,X_4) \mapsto \op{Tr} (JX_0 JX_1 J X_2) \op{Tr} (\d J X_3 \d X_4) + \cdots .
\eeqn
Up to a factor, the corresponding local cocycle is
\beqn
\int_{\C^3} \op{Tr}(J \mu \del J \mu) \op{Tr}(\del J \mu \del J \mu) .
\eeqn

\printbibliography

\end{document}